\numberwithin{equation}{subsection}
\numberwithin{figure}{subsection}
\newtheorem{theorem}{Theorem}[subsection]
\newtheorem{remark}[theorem]{Remark}
\newtheorem{corollary}[theorem]{Corollary}
\def\span{\operatorname{Span}}%
\def\det{\operatorname{det}}%
\def\deg{\operatorname{deg}}%
\def\dim{\operatorname{dim}}%
\def\sp{\operatorname{Sp}}%
\def\min{\operatorname{min}}%
\def\gl{\operatorname{GL}}%
\def\sl{\operatorname{SL}}%
\def\inv{\operatorname{Inv}}%
\newenvironment{proof}[1][Proof:]{\begin{trivlist}
\item[\hskip \labelsep {\bfseries #1}]}{\end{trivlist}}
\newcommand{\qed}{\nobreak \ifvmode \relax \else
      \ifdim\lastskip<1.5em \hskip-\lastskip
      \hskip1.5em plus0em minus0.5em \fi \nobreak
      \vrule height0.75em width0.5em depth0.25em\fi}
\fi\ProvidesPackage{diagrams}[2014/12/31 v3.94 Paul Taylor's
commutative diagrams]%%
\else\message{WARNING: the
\string\diagram\space command is already defined and will not be
loaded again}\expandafter\endinput \fi
\edef\cdrestoreat{%%
\noexpand\catcode`\noexpand\@=\the\catcode`\@%%
\noexpand\catcode`\noexpand\#=\the\catcode`\#%%
\noexpand\catcode`\noexpand\$=\the\catcode`\$%%
\noexpand\catcode`\noexpand\<=\the\catcode`\<%%
\noexpand\catcode`\noexpand\>=\the\catcode`\>%%
\noexpand\catcode`\noexpand\:=\the\catcode`\:%% Johannes L. Braams's
\noexpand\catcode`\noexpand\;=\the\catcode`\;%% Babel languages package
\noexpand\catcode`\noexpand\!=\the\catcode`\!%% makes these \active.
\noexpand\catcode`\noexpand\?=\the\catcode`\?%%
\noexpand\catcode`\noexpand\+=\the\catcode'53%% texinfo @+ is @outer@active
}\catcode`\@=11 \catcode`\#=6 \catcode`\<=12 \catcode`\>=12
\let\diagram@help@messages y\fi
\def\cdps@Rokicki#1{\special{ps:#1}}\let\cdps@dvips\cdps@Rokicki\let
\let\CD@HB\cdps@Rokicki\let\CD@IK\cdps@Rokicki
\let\CD@HB\cdps@Rokicki%%
\def\cdps@Bechtolsheim#1{\special{dvitps: Literal "#1"}}%
\let\cdps@dvitps\cdps@Bechtolsheim\let\cdps@IntegratedComputerSystems
\def\cdps@Clark#1{\special{dvitops: inline #1}}%%
\let\cdps@dvitops\cdps@Clark%%
\let\cdps@OzTeX\empty\let\cdps@oztex\empty\let\cdps@Trevorrow\empty%%
\def\cdps@Coombes#1{\special{ps-string #1}}%%
\def\CD@DE{\global\let}\def\CD@RH{\outer\def}
\xdef\CD@o{\string\{}\xdef\CD@yC{\string\}}%%
\xdef\CD@S{\string\&}%%ascii three ands
\xdef\CD@nC{\string\$}\gdef\CD@LG{$$}%%ascii three dollars
\gdef\CD@uG{^^J}%%ascii two carets
\gdef\CD@uG{^^M}%%ascii two carets
\gdef\CD@uG{^^J}%%ascii two carets
\mathchardef\lessthan='30474 \mathchardef\greaterthan='30476
\font\tenln=line10\relax%% Hit return - who needs diagonals?
\let\tenlnw\nullfont\else%%
\font\tenlnw=linew10\relax%% Hit return - who needs diagonals?
\def\cd@shouldnt#1{\CD@KB{* THIS (#1) SHOULD NEVER HAPPEN! *}}
\def\get@round@pair#1(#2,#3){#1{#2}{#3}}%%ascii round brackets ()
\def\get@square@arg#1[#2]{#1{#2}}%%ascii square brackets []
\def\CD@AE#1{\CD@PK\let\CD@DH\CD@@E\CD@@E#1,],}%%ascii sq brackets
\def\CD@m{[}\def\CD@RD{]}\def\commdiag#1{{\let\enddiagram\relax\diagram[]#1%
\enddiagram}}
\def\CD@BF{{\ifx\CD@EH[\aftergroup\get@square@arg\aftergroup\CD@YH\else
\aftergroup\CD@JH\fi}}%%
\def\CD@CF#1#2{\def\CD@YH{#1}\def\CD@JH{#2}\futurelet\CD@EH\CD@BF}
\def\CD@KK{|}
\def\CD@PB{%% arguments to maps inside diagrams
\tokcase\CD@DD:\CD@y\break@args;\catcase\@super:\upper@label;\catcase\CD@lJ:%
\lower@label;\tokcase{~}:\middle@label;%%ascii tilde
\tokcase<:\CD@iF;%%ascii less-than
\tokcase>:\CD@iI;%%ascii greater-than
\tokcase(:\CD@BC;%%)%ascii open round bracket
\tokcase[:\optional@;%%]%ascii open square bracket
\tokcase.:\CD@JJ;%%ascii dot 12.7.94
\catcase\space:\eat@space;\catcase\bgroup:\positional@;\default:\CD@@A
\break@args;\endswitch}
\def\switch@arg{%% arguments to horizontal maps outside diagrams
\catcase\@super:\upper@label;\catcase\CD@lJ:\lower@label;\tokcase[:\optional@
;%%]%ascii open square bracket
\tokcase.:\CD@JJ;%%ascii dot 12.7.94 % ; was : before 15.6.97
\catcase\space:\eat@space;\catcase\bgroup:\positional@;\tokcase{~}:%
\middle@label;%%ascii tilde (questionable!)
\default:\CD@y\break@args;\endswitch}
\let\CD@tJ\relax\ifx\protect\CD@qK\let\protect\relax\fi\ifx\AtEndDocument
\def\CD@PG{\CD@gB}\def\CD@GF#1#2{}\else\def\CD@PG#1{\edef\CD@CH{#1}%
\expandafter\CD@oC\CD@CH\CD@OD}\def\CD@oC#1\CD@OD{\AtEndDocument{\typeout{%
\CD@tA: #1}}}\def\CD@GF#1#2{\gdef#1{#2}\AtEndDocument{#1}}\fi\def\CD@ZA#1#2{%
\def#1{\CD@PG{#2\CD@mD\CD@W}\CD@DE#1\relax}}\def\CD@uF#1\repeat{\def\CD@p{#1}%
\CD@OF}\def\CD@OF{\CD@p\relax\expandafter\CD@OF\fi}\def\CD@sF#1\repeat{\def
\CD@q{#1}\CD@PF}\def\CD@PF{\CD@q\relax\expandafter\CD@PF\fi}\def\CD@tF#1%
\def\CD@QF{\CD@r\relax\expandafter\CD@QF\fi}\def
\def\CD@rG#1#2{\csname newtoks\endcsname#1#1=%
\expandafter{\csname#2\endcsname}}\else\csname
\no@cd@help \no@cd@help{See the
manual}\def\CD@rG#1#2{\let#1\no@cd@help}\fi\chardef\CD@lF =1
\chardef\CD@lI=2 \chardef\CD@MH=5 \chardef\CD@tH=6 \chardef\CD@sH=7
\chardef\CD@PC=9 \dimendef\CD@hI=2 \dimendef\CD@hF=3
\def\sdef#1#2{\def#1{#2}%
}\def\CD@L#1{\expandafter\aftergroup\csname#1\endcsname}\def\CD@RC#1{%
\expandafter\def\csname#1\endcsname}\def\CD@sD#1{\expandafter\gdef\csname#1%
\endcsname}\def\CD@vC#1{\expandafter\edef\csname#1\endcsname}\def\CD@nF#1#2{%
\expandafter\let\csname#1\expandafter\endcsname\csname#2\endcsname}\def\CD@EE
\def\CD@AK#1{\csname#1\endcsname}\def\CD@XJ#1{\expandafter\show\csname#1%
\endcsname}\def\CD@ZJ#1{\expandafter\showthe\csname#1\endcsname}\def\CD@WJ#1{%
\expandafter\showbox\csname#1\endcsname}\def\CD@tA{Commutative
Diagram}\edef
\edef\CD@dC{\string\diagram}\edef\CD@HD{\string\enddiagram
}\edef\CD@EC{\string\\}\def\CD@eF{LaTeX}\ifx\@ignoretrue\CD@qK\expandafter
\def\@ignoretrue{%
\global\ignore@true}\def\@ignorefalse{\global\ignore@false}\fi
\def\CD@g{{\ifnum0=`}\fi}\def\CD@wC{\ifnum0=`{\fi}}\def\catcase#1:{\ifcat
\noexpand\CD@EH#1\CD@tJ\expandafter\CD@kC\else\expandafter\CD@dJ\fi}\def
\def\CD@kC#1;#2\endswitch{#1}\def\CD@dJ#1;{}\let\endswitch\relax\def\default:%
\def\at@{@}\fi\edef\CD@P{\CD@o pt\CD@yC}%
\lTo\sp{#1}\sb{#2}\CD@z}\CD@RC{\CD@P)}#1)#2){\CD@z\rTo\sp{#1}\sb{#2}\CD@z}%
\def\CD@O{\def\endCD{\enddiagram}\CD@RC{\CD@P A}##1A##2A{\uTo<{##1}>{##2}%
\CD@z\CD@z}\CD@RC{\CD@P V}##1V##2V{\dTo<{##1}>{##2}\CD@z\CD@z}\CD@RC{\CD@P=}{%
\CD@z\hEq\CD@z}\CD@RC{\CD@P\CD@KK}{\vEq\CD@z\CD@z}\CD@RC{\CD@P\string\vert}{%
\vEq\CD@z\CD@z}\CD@RC{\CD@P.}{\CD@z\CD@z}\let\CD@z\CD@Q}\def\CD@IE{\let\tmp
\CD@JE\ifcat
A\noexpand\CD@CH\else\ifcat=\noexpand\CD@CH\else\ifcat\relax
\noexpand\CD@CH\else\let\tmp\at@\fi\fi\fi\tmp}\def\CD@JE#1{\CD@nF{tmp}{\CD@P
\string#1}\ifx\tmp\relax\def\tmp{\at@#1}\fi\tmp}\def\CD@z{}\begingroup
\def\aftergroup\CD@T\aftergroup{\aftergroup\def\catcode`\@\active
\aftergroup
@\endgroup{\futurelet\CD@CH\CD@IE}}\newcount\CD@uA\newcount\CD@vA
\newdimen\CD@OA\newdimen\CD@PA\CD@tG\CD@gE
\newdimen\CD@RA\newdimen\CD@SA\newcount
\newdimen\CD@QA\newbox\CD@DA\CD@tG\CD@lE\CD@dA\CD@bA
\def\CD@V#1#2{\ifdim#1<#2\relax#1=#2\relax\fi}%
\def\CD@X#1#2{\ifdim#1>#2\relax#1=#2\relax\fi}\newdimen\CD@XH\CD@XH=1sp
\newdimen\CD@zC\CD@zC\z@\def\CD@cJ{\ifdim\CD@zC=1em\else\CD@nJ\fi}\def\CD@nJ{%
\CD@zC1em\def\CD@NC{\fontdimen8\textfont3
}\CD@@J\CD@NJ\setbox0=\vbox{\CD@t
\noindent\CD@k\null\penalty-9993\null\CD@ND\null\endgraf\setbox0=\lastbox
\unskip\unpenalty\setbox1=\lastbox\global\setbox\CD@IG=\hbox{\unhbox0\unskip
\unskip\unpenalty\setbox0=\lastbox}\global\setbox\CD@KG=\hbox{\unhbox1\unskip
\unpenalty\setbox1=\lastbox}}}\newdimen\CD@@I\CD@@I=1true in
\def\CD@zH#1{\multiply#1\tw@\advance#1\ifnum#1<\z@-\else+\fi\CD@@I\divide#1%
\tw@\divide#1\CD@@I\multiply#1\CD@@I}\def\MapBreadth{\afterassignment\CD@gI
\CD@LF}\newdimen\CD@LF\newdimen\CD@oI\def\CD@gI{\CD@oI\CD@LF\CD@V\CD@@I{4%
\CD@XH}\CD@X\CD@@I\p@\CD@zH\CD@oI\ifdim\CD@LF>\z@\CD@V\CD@oI\CD@@I\fi\CD@cJ}%
\def\CD@RJ#1{\CD@zD\count@\CD@@I#1\ifnum\count@>\z@\divide\CD@@I\count@\fi
\CD@gI\CD@NJ}\def\CD@NJ{\dimen@\CD@QC\count@\dimen@\divide\count@5\divide
\count@\CD@@I\edef\CD@OC{\the\count@}}\def\CD@AJ{\CD@QJ\z@}\def\CD@QJ#1{%
\CD@tI\axisheight\advance\CD@tI#1\relax\advance\CD@tI-.5\CD@oI\CD@zH\CD@tI
\CD@sI-\CD@tI\advance\CD@tI\CD@LF}\newdimen\CD@DC\CD@DC\z@\newdimen\CD@eJ
\def\CD@CJ#1{\CD@sI#1\relax\CD@tI\CD@sI\advance\CD@tI\CD@LF\relax}%
\def\horizhtdp{height\CD@tI depth\CD@sI}\def\axisheight{\fontdimen22\the
\textfont\tw@}\def\script@axisheight{\fontdimen22\the\scriptfont\tw@}\def
\def\CD@NC{0.4pt}\def
\def\CD@UK{\fontdimen3\textfont\z@}\newdimen
\newdimen\CD@nA\CD@nA\z@\def\CD@RG{\ifincommdiag1.3em\else2em\fi}%
\newdimen\CD@YB\def\CellSize{\afterassignment\CD@kB\DiagramCellHeight}%
\newdimen\DiagramCellHeight\DiagramCellHeight-\maxdimen\newdimen
\def\CD@kB{\DiagramCellWidth
\DiagramCellHeight}\def\CD@QC{3em}\newdimen\MapShortFall\def\MapsAbut{%
\MapShortFall\z@\objectheight\z@\objectwidth\z@}\newdimen\CD@iA\CD@iA\z@
\fi\CD@nF{%
ifUglyObsoleteDiagrams}{relax}\newif\ifUglyObsoleteDiagrams\def\CD@nK{\CD@aB
\UglyObsoleteDiagramsfalse}\def\CD@oK{\CD@ZB\UglyObsoleteDiagramstrue}\CD@vE
\def\CD@sK{\ifx\pdfoutput
\CD@qK\else\ifx\pdfoutput\relax\else\ifnum\pdfoutput>\z@\CD@pK\fi\fi\fi}
\def
\global\let\CD@oK\relax\global\let\CD@pK\relax\global\let\CD@sK
\def\CD@tK#1{}\ifx\pdfliteral\CD@qK\else\ifx
\let\CD@tK\pdfliteral\fi\fi\ifx\XeTeXrevision\CD@qK
\def\newarrowhead{\CD@mG h\CD@BG\CD@GG>}%
\def\newarrowtail{\CD@mG t\CD@BG\CD@GG>}\def\newarrowmiddle{\CD@mG m\CD@BG
\hbox@maths\empty}\def\newarrowfiller{\CD@mG f\CD@bE\CD@MK-}\def\CD@mG#1#2#3#%
\CD@ZA\CD@MC{\CD@eF\space diagonals are used unless
PostScript is set}\def\defaultarrowhead#1{\edef\CD@sJ{#1}\CD@@J}\def\CD@@J{%
\CD@IJ\CD@sJ<>ht\CD@IJ\CD@sJ<>th}\def\CD@IJ#1#2#3#4#5{\CD@HJ{r#4}{#3}{l#5}{#2%
}{r#4:#1}\CD@HJ{r#5}{#2}{l#4}{#3}{l#4:#1}\CD@HJ{d#4}{#3}{u#5}{#2}{d#4:#1}%
\CD@HJ{d#5}{#2}{u#4}{#3}{u#4:#1}}\def\CD@HJ#1#2#3#4#5{\begingroup\aftergroup
\CD@GJ\CD@L{#1+:#2}\CD@L{#1:#2}\CD@L{#3:#4}\CD@L{#5}\endgroup}\def\CD@GJ#1#2#%
\def\CD@sJ{}\CD@@J\def\CD@GJ#1#2#3#4{\setbox#1=#4}\ifx\tenln
\def\CD@sJ{vee}\else\let\CD@sJ\CD@eF\fi\def\CD@xF#1#2#3{\begingroup
\aftergroup\CD@wF\CD@L{#1#2:#3#3}\CD@L{#1#2:#3}\aftergroup\CD@yF\CD@L{#1#2:#3%
-#3}\CD@L{#1#2:#3}\endgroup}\def\CD@wF#1#2{\def#1{\hbox{\rlap{#2}\kern.4%
\CD@zC#2}}}\def\CD@yF#1#2{\def#1{\hbox{\rlap{#2}\kern.4\CD@zC#2\kern-.4\CD@zC
}}}\CD@xF lh>\CD@xF rt>\CD@xF rh<\CD@xF rt<\def\CD@yF#1#2{\def#1{\hbox{\kern-%
.4\CD@zC\rlap{#2}\kern.4\CD@zC#2}}}\CD@xF rh>\CD@xF lh<\CD@xF lt>\CD@xF lt<%
\def\CD@wF#1#2{\def#1{\vbox{\vbox to\z@{#2\vss}\nointerlineskip\kern.4\CD@zC#%
2}}}\def\CD@yF#1#2{\def#1{\vbox{\vbox to\z@{#2\vss}\nointerlineskip\kern.4%
\CD@zC#2\kern-.4\CD@zC}}}\CD@xF uh>\CD@xF dt>\CD@xF dh<\CD@xF dt<\def\CD@yF#1%
\def\CD@BG#1{\hbox{%
\mathsurround\z@\offinterlineskip\CD@k\mkern-1.5mu{#1}\mkern-1.5mu\CD@ND}}%
\def\hbox@maths#1{\hbox{\CD@k#1\CD@ND}}\def\CD@GG#1{\hbox to\CD@LF{\setbox0=%
\hbox{\offinterlineskip\mathsurround\z@\CD@k{#1}\CD@ND}\dimen0.5\wd0\advance
\dimen0-.5\CD@oI\CD@zH{\dimen0}\kern-\dimen0\unhbox0\hss}}\def\CD@sB#1{\hbox
to2\CD@LF{\hss\offinterlineskip\mathsurround\z@\CD@k{#1}\CD@ND\hss}}\def
\def\CD@bE#1{\hbox{\kern-.15%
\CD@zC\CD@k{#1}\CD@ND\kern-.15\CD@zC}}\def\CD@MK#1{\vbox{\offinterlineskip
\kern-.2ex\CD@GG{#1}\kern-.2ex}}\def\@fillh{\xleaders\vrule\horizhtdp}\def
\def\CD@@D{\hbox{\vrule
height 1pt
depth-1pt width 1pt}}\CD@RC{rf:}{\CD@@D}\CD@nF{lf:}{rf:}\CD@nF{+f:}{rf:}%
\def\CD@BD{\CD@U\null
\CD@@D\null\CD@@D\null}\edef\CD@lG{\string\newarrow}\def\newarrow#1#2#3#4#5#6%
\edef\@name{#1}\edef\CD@oJ{#2}\edef\CD@iD{#3}\edef\CD@QG{#4}\edef
\edef\CD@LE{#6}\let\CD@HE\CD@sG\let\CD@FK\CD@BH\let\@x\CD@AH\ifx
\let\CD@oJ\empty\fi\ifx\CD@LE\CD@jD\let\CD@LE\empty\fi\def\CD@LI{%
r}\def\CD@SF{l}\def\CD@IC{d}\def\CD@yJ{u}\def\CD@gH{+}\def\@m{-}\ifx\CD@iD
\let\CD@QG\empty\fi\ifx\CD@LE\empty\ifx\CD@iD\CD@aE\let
\let\@x\CD@zG\fi\fi\else\edef\CD@a{\CD@iD\CD@oJ}\ifx\CD@a\empty
\let\CD@QG\empty\fi\fi\fi\ifmmode\aftergroup\CD@kG\else\CD@@A
\CD@b\CD@L{r\@name}\fi\fi\endgroup}\def\CD@sG{\CD@vG\CD@LI
\CD@SF rl\Horizontal@Map}\def\CD@BH{\CD@vG\CD@IC\CD@yJ
du\Vertical@Map}\def
\def\CD@yG{\CD@vG\CD@gH\@m+-\Slant@Map}%
\def\CD@zG{\CD@vG\CD@gH\@m+-\Slope@Map}\catcode`\/=\active\def\CD@vG#1#2#3#4#%
\def\CD@jG#1#2#3#4//{\edef\CD@fG
{#2}\aftergroup\sdef\CD@L{#1\@name}\aftergroup{\aftergroup#3\CD@M#4//%
\aftergroup}}\def\CD@M#1/{\edef\CD@EH{#1}\ifx\CD@EH\empty\else\CD@L{\CD@fG#1}%
\expandafter\CD@M\fi}\catcode`\/=12
\def\CD@nG#1#2#3#4#5#6#7#8{\aftergroup
\sdef\CD@L{#6\@name}\aftergroup{\CD@L{#2\@name}\if#2#4\aftergroup\CD@CI\else
\aftergroup\CD@BI\fi\CD@L{#1\@name}%
%% ASCII round brackets and comma (,) appear on the next line
\aftergroup(\aftergroup#3\aftergroup,\aftergroup#5\aftergroup)\aftergroup}}%
\def\CD@oB#1#2#3#4{\expandafter\ifx\csname#1#2:#4\endcsname\relax\CD@y\CD@gB{%
arrow#3 "#4" undefined}\fi}\CD@rG\CD@VE{All five components must be
defined before an arrow.}\CD@rG\CD@SE{\CD@lG, unlike
\string\HorizontalMap, is a
declaration.}\def\CD@b#1{\CD@YA{Arrows \string#1 etc could not be defined}%
\CD@VE}\def\CD@kG{\CD@YA{misplaced \CD@lG}\CD@SE}\def\newdiagramgrid#1#2#3{%
\CD@RC{cdgh@#1}{#2,],}%% ASCII close square bracket
\CD@RC{cdgv@#1}{#3,],}}%% ASCII close square bracket
\def\CD@yH{\CD@VA6 }\def\CD@OB{\CD@VA1 \global\CD@yA1
\CD@DE\CD@YF\empty}\def\CD@YF{}\def\CD@nB#1{\relax\CD@MD\edef\CD@vJ{#1}%
\begingroup\CD@rE\else\ifcase\CD@VA\ifmmode\else\CD@YG\CD@E0\fi\or\CD@cE5\or
\CD@YG\CD@F5\or\CD@YG\CD@B5\or\CD@YG\CD@B5\or\CD@YG\CD@C5\or\CD@cE7\or\CD@YG
\CD@D7\fi\fi\endgroup\xdef\CD@YF{#1}}\def\CD@pB#1#2#3#4#5{\relax\CD@MD\xdef
\CD@vJ{#4}\begingroup\ifnum\CD@VA<#1
\expandafter\CD@cE\ifcase\CD@VA0\or#2\or
#3\else#2\fi\else\ifnum\CD@VA<6
\CD@tJ\CD@YG\CD@B#2\else\CD@YG\CD@G#2\fi\fi
\endgroup\CD@DE\CD@YF\CD@vJ\ifincommdiag\let\CD@ZD#5\else\let\CD@ZD\CD@LK\fi}%
\def\CD@yI{\global\CD@yA=\ifnum\CD@VA<5 1\else2\fi\relax}\def\CD@OI{\CD@VA
\CD@yA}\def\CD@cE#1{\aftergroup\CD@VA\aftergroup#1\aftergroup\relax}\def
\let\CD@yI\relax\let\CD@OI\relax}\def\CD@FH#1#2#3#4#5{\ifincommdiag\let\CD@ZD
#5\else\xdef\CD@vJ{#4}\let\CD@ZD\CD@LK\fi}\def\CD@YG#1{\aftergroup#1%
\aftergroup\relax\CD@cE}\def\CD@B{\CD@YE\CD@S\CD@ME\CD@Q}\def\CD@G{\CD@YE{%
\CD@yC\CD@S}\CD@XE\CD@QD\CD@Q}\def\CD@F{\CD@YE{*\CD@S}\CD@RE\clubsuit\CD@Q}%
\def\CD@C{\CD@YE{\CD@S*\CD@S}\CD@RE\CD@Q\clubsuit\CD@Q}\def\CD@D{\CD@YE\CD@EC
\CD@TE\\}\def\CD@E{\CD@YE\CD@nC\CD@QE\CD@k}\def\CD@LK{\CD@YA{\CD@vJ\space
ignored \CD@dH}\CD@WE}\def\CD@FE{}\def\CD@d{\CD@YA{maps must never
be enclosed in braces}\CD@OE}\def\CD@dH{outside
diagram}\def\CD@FC{\string\HonV, \string \VonH\space and
\string\HmeetV}\CD@rG\CD@ME{The way that horizontal and vertical
arrows are terminated implicitly means\CD@uG that they cannot be
mixed with each other or with
\CD@FC.}\CD@rG\CD@XE{\string\pile\space is for
parallel horizontal arrows; verticals can just be put together in\CD@uG a cell%
. \CD@FC\space are not meaningful in a
\string\pile.}\CD@rG\CD@RE{The horizontal maps must point to an
object, not each other (I've put in\CD@uG one which you're unlikely
to want). Use \string\pile\space if you want them
parallel.}\CD@rG\CD@TE{Parallel horizontal arrows must be in
separate layers of a \string\pile.}\CD@rG\CD@QE{Horizontal arrows
may be used \CD@dH s, but
must still be in maths.}\CD@rG\CD@WE{Vertical arrows, \CD@FC\space\CD@dH s don%
't know where\CD@uG where to terminate.}\CD@rG\CD@OE{This prevents
them from stretching correctly.}\def\CD@YE#1{\CD@YA{"#1" inserted
\ifx\CD@YF\empty before \CD@vJ\else between \CD@YF\ifx\CD@YF\CD@vJ
s\else\space and \CD@vJ\fi \fi}}\count@=\year\multiply\count@12
\def
\def\CD@TJ{\CD@GB-%
9999 \let\CD@ZD\CD@XD\ifincommdiag\else\CD@cJ\ifinpile\else\skip2\z@ plus 1.5%
\CD@VK minus .5\CD@UK\skip4\skip2 \fi\fi\let\CD@kD\@fillh\CD@nF{fill@dot}{rf:%
.}}\def\Vector@Map{\CD@HK4}\def\Slant@Map{\CD@HK{\CD@EF255\else6\fi}}\def
\def\CD@HK#1#2#3#4#5#6{\CD@LC\def\CD@WK{2}\def\CD@aK{%
2}\def\CD@ZK{1}\def\CD@bK{1}\let\Horizontal@Map\CD@nI\def\CD@OG{#1}\def\CD@NI
{\CD@U#2#3#4#5#6}}\def\CD@nI{\CD@TJ\CD@JB\let\CD@ZD\CD@TD\CD@qD}\CD@tG\CD@pE
\def\cds@missives{\CD@rA}\def\CD@TD{\CD@vE\let\CD@OG\CD@OC
\CD@x\CD@zE\CD@WF\fi\setbox0\hbox{\incommdiagfalse\CD@HI}\CD@pE\CD@aD\else
\global\CD@YC\CD@bD\fi\ifvoid6 \ifvoid7
\CD@eE\fi\fi\CD@zE\else\CD@BD\global
\CD@YC\let\CD@CG\CD@IH\CD@YD\fi\else\CD@NI\CD@MI\global\CD@YC\CD@YD\fi}\def
\def\CD@U#1#2#3#4#5{\let\CD@oJ#1\let\CD@iD#2\let\CD@QG#3%
\let\CD@jD#4\let\CD@LE#5\CD@TB\ifx\CD@iD\CD@jD\CD@UB\fi}\def\CD@qD#1#2#3#4#5{%
\CD@U#1#2#3#4#5\CD@tD}\def\Vertical@Map{\CD@pB433{vertical
map}\CD@cD\CD@LC
\CD@GB-9995 \let\CD@kD\@fillv\CD@nF{fill@dot}{df:.}\CD@qD}\def\break@args{%
\def\CD@tD{\CD@ZD}\CD@ZD\endgroup\aftergroup\CD@FE}\def\CD@MJ{\setbox1=\CD@oJ
\setbox5=\CD@LE\ifvoid3 \ifx\CD@QG\null\else\setbox3=\CD@QG\fi\fi\CD@@G2%
\CD@iD\CD@@G4\CD@jD}\def\CD@pF#1{\ifvoid1\else\CD@oF1#1\fi\ifvoid2\else\CD@oF
2#1\fi\ifvoid3\else\CD@oF3#1\fi\ifvoid4\else\CD@oF4#1\fi\ifvoid5\else\CD@oF5#%
1\fi}
\def\CD@oF#1#2{\setbox#1\vbox{\offinterlineskip\box#1\dimen@\prevdepth
\advance\dimen@-#2\relax\setbox0\null\dp0\dimen@\ht0-\dimen@\box0}}\def\CD@@G
\CD@ZA\CD@BK{\string\HorizontalMap, \string\VerticalMap\space
and
\string\DiagonalMap\CD@uG are obsolete - use \CD@lG\space to pre-define maps}%
\def\HorizontalMap#1#2#3#4#5{\CD@BK\CD@nB{old horizontal map}\CD@LC\CD@TJ\def
\CD@oJ{\CD@UH{#1}}\CD@SH\CD@iD{#2}\def\CD@QG{\CD@UH{#3}}\CD@SH\CD@jD{#4}\def
\CD@LE{\CD@UH{#5}}\CD@tD}\def\VerticalMap#1#2#3#4#5{\CD@BK\CD@pB433{vertical
map}\CD@cD\CD@LC\CD@GB-9995
\let\CD@kD\@fillv\def\CD@oJ{\CD@GG{#1}}\CD@VH
\CD@iD{#2}\def\CD@QG{\CD@GG{#3}}\CD@VH\CD@jD{#4}\def\CD@LE{\CD@GG{#5}}\CD@tD}%
\def\DiagonalMap#1#2#3#4#5{\CD@BK\CD@LC\def\CD@OG{4}\let\CD@kD\CD@qK\let
\CD@ZD\CD@YD\def\CD@WK{2}\def\CD@aK{2}\def\CD@ZK{1}\def\CD@bK{1}\def\CD@QG{%
\CD@vF{#3}}\ifPositiveGradient\let\mv\raise\def\CD@oJ{\CD@vF{#5}}\def\CD@iD{%
\CD@vF{#4}}\def\CD@jD{\CD@vF{#2}}\def\CD@LE{\CD@vF{#1}}\else\let\mv\lower\def
\CD@oJ{\CD@vF{#1}}\def\CD@iD{\CD@vF{#2}}\def\CD@jD{\CD@vF{#4}}\def\CD@LE{%
\CD@vF{#5}}\fi\CD@tD}\def\CD@aE{-}\def\CD@AD{\empty}\def\CD@SH{\CD@EG\CD@bE
\CD@aE\@fillh}\def\CD@VH{\CD@EG\CD@MK\CD@KK\@fillv}\def\CD@EG#1#2#3#4#5{\def
\CD@CH{#5}\ifx\CD@CH#2\let#4#3\else\let#4\null\ifx\CD@CH\empty\else\ifx\CD@CH
\CD@AD\else\let#4\CD@CH\fi\fi\fi}\def\CD@UH#1{\hbox{\mathsurround\z@
\offinterlineskip\def\CD@CH{#1}\ifx\CD@CH\empty\else\ifx\CD@CH\CD@AD\else
\CD@k\mkern-1.5mu{\CD@CH}\mkern-1.5mu\CD@ND\fi\fi}}\def\CD@yD#1#2{\setbox#1=%
\hbox\bgroup\setbox0=\hbox{\CD@k\labelstyle()\CD@ND}%% ASCII round brackets
\setbox1=\null\ht1\ht0\dp1\dp0\box1
\kern.1\CD@zC\CD@k\bgroup\labelstyle
\aftergroup\CD@LD\CD@xD}\def\CD@LD{\CD@ND\kern.1\CD@zC\egroup\CD@tD}\def
\def\CD@mJ{%% qualifiers on label arguments
\catcase\bgroup:\CD@v;\catcase\egroup:\missing@label;\catcase\space:\CD@TF;%
\tokcase[:\CD@XF;%%]%ascii close square bracket
\default:\CD@zJ;\endswitch}\def\CD@v{\let\CD@MD\CD@c\let\CD@CH}\def\CD@zJ#1{%
\let\CD@UF\egroup{\let\actually@braces@missing@around@macro@in@label\CD@ZH
\let\CD@MD\CD@xC\let\CD@UF\CD@VF#1%
\actually@braces@missing@around@macro@in@label}\CD@UF}\def
\def\missing@label
\egroup\CD@YA{missing
label}\CD@PE}\def\CD@xC{\egroup\missing@label}\outer
\def\CD@ZH{}\def\CD@UF{}\def\CD@VF{\CD@wC\CD@UF}\def\CD@MD{}\def\CD@XF{\let
\CD@N\CD@xD\get@square@arg\CD@AE}\CD@rG\CD@PE{The text which has
just been read is not allowed within map
labels.}\def\CD@c{\egroup\CD@YA{missing \CD@yC \space inserted after
label}\CD@PE}\def\upper@label{\CD@oD\CD@yD6}\def
\def\middle@label{%
\CD@yD3}\CD@tG\CD@yE\CD@pD\CD@oD\def\CD@iF{\ifPositiveGradient\CD@tJ
\expandafter\upper@label\else\expandafter\lower@label\fi}\def\CD@iI{%
\ifPositiveGradient\CD@tJ\expandafter\lower@label\else\expandafter
\upper@label\fi}\def\positional@{\CD@gB{labels as positional
arguments are
obsolete}\CD@yE\CD@tJ\expandafter\upper@label\else\expandafter\lower@label\fi
-}\def\CD@tD{\futurelet\CD@EH\switch@arg}\def\eat@space{\afterassignment
\CD@tD\let\CD@EH= }\def\CD@TF{\afterassignment\CD@xD\let\CD@EH= }\def\CD@BC{%
\get@round@pair\CD@uD}\def\CD@uD#1#2{\def\CD@WK{#1}\def\CD@aK{#2}\CD@tD}\def
\def\CD@JJ.{\CD@sC\CD@tD}\def
\def\CD@MI{}\def\CD@@E#1,{\CD@nH#1,\begingroup\ifx\@name\CD@RD
\CD@FF\aftergroup\CD@e\fi\aftergroup\CD@jC\else\expandafter\def\expandafter
\CD@RF\expandafter{\csname\@name\endcsname}\expandafter\CD@vD\CD@RF\CD@KD\ifx
\CD@RF\empty\aftergroup\CD@pC\expandafter\aftergroup\csname\CD@FB\@name
\endcsname\expandafter\aftergroup\csname\CD@FB @\@name\endcsname\else\gdef
\CD@GE{#1}\CD@gB{\string\relax\space inserted before
`[\CD@GE'}\message{(I was
trying to read this as a \CD@tA\ option.)}\aftergroup\CD@H\fi\fi\endgroup}%
\def\CD@vD#1#2\CD@KD{\def\CD@RF{#2}}\def\CD@jC{\let\CD@CH\CD@N\let\CD@N\relax
\CD@CH}\def\CD@H#1],{%% ASCII close square bracket
\CD@jC\relax\def\CD@RF{#1}\ifx\CD@RF\empty\def\CD@RF{[\CD@GE]}%
%% ASCII open and close square bracket
\else\def\CD@RF{[\CD@GE,#1]}%% ASCII open and close square bracket
\fi\CD@RF}\def\CD@pC#1#2{\ifx#2\CD@qK\ifx#1\CD@qK\CD@gB{option
`\@name'
undefined}\else#1\fi\else\CD@FF\expandafter#2\CD@GK\CD@PK\else\CD@QK\fi\fi
\CD@DH}\CD@tG\CD@FF\CD@QK\CD@PK\def\CD@nH#1,{\CD@FF\ifx\CD@GK\CD@qK\CD@e\else
\expandafter\CD@oH\CD@GK,#1,(,),(,)[]%
%%ASCII 5commas two pairs round, pair square
\fi\fi\CD@FF\else\CD@mH#1==,\fi}\def\CD@e{\CD@gB{option `\@name'
needs (x,y)
value}\CD@PK\let\@name\empty}\def\CD@mH#1=#2=#3,{\def\@name{#1}\def\CD@GK{#2}%
\def\CD@RF{#3}\ifx\CD@RF\empty\let\CD@GK\CD@qK\fi}%
\def\CD@oH#1(#2,#3)#4,(#5,#6)#7[]{\def\CD@GK{{#2}{#3}}\def\CD@RF{#1#4#5#6}%
\ifx\CD@RF\empty\def\CD@RF{#7}\ifx\CD@RF\empty\CD@e\fi\else\CD@e\fi}\def
\let\CD@N\relax\def\CD@zD#1{\ifx\CD@GK\CD@qK\CD@gB{option
`\@name
' needs a value}\else#1\CD@GK\relax\fi}\def\CD@BE#1#2{\ifx\CD@GK\CD@qK#1#2%
\relax\else#1\CD@GK\relax\fi}\def\cds@@showpair#1#2{\message{x=#1,y=#2}}\def
\def\CD@DI#1{\def\CD@CH
{#1}\CD@nF{@x}{cdps@#1}\ifx\CD@CH\empty\CD@f\CD@CH{cannot be
used}\else\ifx
\CD@CH\relax\CD@f\CD@CH{unknown}\else\let\CD@IK\@x\fi\fi}\def\CD@f#1#2{\CD@gB
{PostScript translator `#1' #2}}\def\CD@PH{}\def\CD@PJ{\CD@fA\edef\CD@PH{%
\noexpand\CD@KB{\@name\space ignored within
maths}}}\def\diagramstyle{\CD@cJ
\let\CD@N\relax\CD@CF\CD@AE\CD@AE}\CD@tG\CD@sE
\CD@hG\CD@RC{cds@
}{}\CD@RC{cds@}{}\CD@RC
\def\cds@abut{\MapsAbut\dimen1\z@
\dimen5\z@}\def\cds@alignlabels{\CD@IA\CD@KA}\def\cds@amstex{\ifincommdiag
\CD@O\else\def\CD{\diagram[amstex]}%%ascii square brackets []
\fi\CD@T\catcode`\@\active}\def\cds@b{\let\CD@dB\CD@bB}\def\cds@balance{\let
\CD@hA\CD@AA}\let\cds@bottom\cds@b\def\cds@center{\cds@vcentre\cds@nobalance}%
\let\cds@centre\cds@center\def\cds@centerdisplay{\CD@HA\CD@PJ\cds@balance}%
\let\cds@centredisplay\cds@centerdisplay\def\cds@crab{\CD@BE\CD@DC{.5%
\PileSpacing}}\CD@RC{cds@crab-}{\CD@DC-.5\PileSpacing}\CD@RC{cds@crab+}{%
\def\cds@defaultsize{\CD@BE{\let\CD@QC}{3em}\CD@NJ
}\def\cds@displayoneliner{\CD@DB}\let\cds@dotted\CD@sC\def\cds@dpi{\CD@RJ{1%
truein}}\def\cds@dpm{\CD@RJ{100truecm}}\let\CD@XA\CD@qK\def\cds@eqno{\let
\CD@XA\CD@GK\let\CD@EJ\empty}\def\cds@fixed{\CD@qA}\CD@tG\CD@fE\CD@J\CD@I\def
\def\cds@gap
\CD@sI\CD@BE{\wd3=}\MapShortFall}
\def
\relax\CD@gB{%
unknown grid
`\CD@GK'}\else\CD@WB\fi\fi}\let\h@grid\relax\let\v@grid\relax
\def\cds@gridx{\ifx\CD@GK\CD@qK\else\cds@grid\fi\let\CD@CH\h@grid\let\h@grid
\v@grid\let\v@grid\CD@CH}\def\cds@h{\CD@zD\DiagramCellHeight}\def\cds@hcenter
\let\CD@hA\CD@aA}\let\cds@hcentre\cds@hcenter\def\cds@heads{\CD@BE{\let
\CD@sJ}\CD@sJ\CD@@J\CD@vE\else\ifx\CD@sJ\CD@eF\else\CD@MC\fi\fi}\let
\let\cds@hmiddle\cds@balance\def\cds@htriangleheight{\CD@BE
\DiagramCellHeight\DiagramCellHeight\DiagramCellWidth1.73205%
\DiagramCellHeight}\def\cds@htrianglewidth{\CD@BE\DiagramCellWidth
\DiagramCellWidth\DiagramCellHeight.57735\DiagramCellWidth}\CD@tG\CD@zE\CD@eE
\def\cds@hug{\CD@eE}
\def\cds@inline{\CD@gA\let\CD@PH\empty}\def
\def\cds@labelstyle{\CD@zD{\let\labelstyle}}\def\cds@landscape{\CD@kA}\def
\let\CD@EJ\empty\def\CD@FJ{\refstepcounter{%
equation}\def\CD@XA{\hbox{\@eqnnum}}}\def\cds@LaTeXeqno{\let\CD@EJ\CD@FJ}\def
\def\cds@leftflush{\cds@flushleft\CD@J}\def
\def\cds@lowershortfall{%
\ifPositiveGradient\cds@leftshortfall\else\cds@rightshortfall\fi}\def
\def\cds@midhshaft{\CD@JA}\def\cds@midshaft{\CD@JA}\def
\def\cds@moreoptions{\CD@@A}\let\cds@nobalance
\def\cds@nohcheck{\CD@HH}\def\cds@nohug{\CD@dE} \def
\let\cds@noorigin\cds@nobalance\def
\def\cds@UO{\CD@oK\global\let\CD@n\empty}%
\def\cds@UglyObsolete{\cds@UO\let\cds@PS\empty}\def\CD@rK#1{\CD@gB{option `#1%
' renamed as
`UglyObsolete'}}\def\cds@noPostScript{\CD@rK{noPostScript}}\def
\def\cds@notextflow{\CD@RB}\def\cds@noTPIC{%
\CD@CK}\def\cds@objectstyle{\CD@zD{\let\objectstyle}}\def\cds@origin{\let
\CD@hA\CD@iB}\def\cds@p{\CD@zD\PileSpacing}\let\cds@pilespacing\cds@p\def
\def\cds@portrait{\CD@jA}\def
\def\cds@PS{%
\CD@nK\global\let\CD@n\empty}\CD@GF\CD@n{\typeout{\CD@tA: try the
PostScript option for better
results}}\def\cds@repositionpullbacks{\let\make@pbk\CD@fH
\let\CD@qH\CD@pH}\def\cds@righteqno{\CD@oA}\def\cds@rightshortfall{\CD@zD{%
\dimen5
}}\def\cds@ruleaxis{\CD@zD{\let\axisheight}}\def\cds@cmex{\let\CD@GG
\CD@sB\let\CD@QJ\CD@CJ}\def\cds@s{\cds@height\DiagramCellWidth
\DiagramCellHeight}\def\cds@scriptlabels{\let\labelstyle\scriptstyle}\def
\def\cds@showfirstpass{\CD@BE{\let\CD@nD}\z@}\def\cds@silent{\def\CD@KB##1{}%
\def\CD@gB##1{}}\let\cds@size\cds@s\def\cds@small{\CellSize2\CD@zC}\def
\def\cds@t{\let\CD@dB\CD@fB}\def\cds@textflow{%
\CD@SB\CD@PJ}\def\cds@thick{\let\CD@rF\tenlnw\CD@LF\CD@NC\CD@BE\MapBreadth{2%
\CD@LF}\CD@@J}\def\cds@thin{\let\CD@rF\tenln\CD@BE\MapBreadth{\CD@NC}\CD@@J}%
\def\cds@tight{\CD@WB}\let\cds@top\cds@t\def\cds@TPIC{\CD@DK}\def
\def\cds@vcenter{\let\CD@dB\CD@cB}\let\cds@vcentre
\def\cds@vtriangleheight{\CD@BE\DiagramCellHeight
\DiagramCellHeight\DiagramCellWidth.577035\DiagramCellHeight}\def
\def\cds@vmiddle{\let\CD@dB\CD@eB}%
\def\cds@w{\CD@zD\DiagramCellWidth}\let\cds@width\cds@w\def\diagram{\relax
\protect\CD@bC}\def\enddiagram{\protect\CD@SG}\def\CD@bC{\CD@g\CD@uI
\incommdiagtrue\edef\CD@wI{\the\CD@NB}\global\CD@NB\z@\boxmaxdepth\maxdimen
\everycr{}\CD@sK\everymath{}\everyhbox{}\ifx\pdfsyncstop\CD@qK\else
\pdfsyncstop\fi\CD@aC}\def\CD@aC{\CD@y\let\CD@N\CD@ZC\CD@CF\CD@AE\CD@WD}\def
\def\CD@WD{\let
\CD@EH\relax\CD@nE\CD@vE\else\CD@hK\else\CD@KB{landscape ignored
without
PostScript}\CD@jA\fi\fi\fi\CD@EJ\setbox2=\vbox\bgroup\CD@JF\CD@VD}\def\CD@cH{%
\CD@nE\CD@fB\else\CD@dB\fi\CD@hA\nointerlineskip\setbox0=\null\ht0-\CD@pI\dp0%
\CD@pI\wd0\CD@kI\box0
\global\CD@QA\CD@kF\global\CD@yA\CD@XB\ifx\CD@NK\CD@qK
\global\CD@RA\CD@kF\else\global\CD@RA\CD@NK\fi\egroup\CD@zF\CD@nE\setbox2=%
\hbox to\dp2{\vrule height\wd2 depth\CD@QA
width\z@\global\CD@QA\ht2\ht2\z@ \dp2\z@\wd2\z@\CD@hK\CD@tK{q 0 1 -1
0 0 0 cm}\else\global\CD@iG\CD@IK{0 1
bturn}\fi\box2\CD@gK\hss}\CD@DB\fi\ifnum\CD@yA=1
\else\CD@DB\fi\global
\@ignorefalse\CD@mE\leavevmode\fi\ifvmode\CD@TA\else\ifmmode\CD@PH\CD@GI\else
\CD@qE\CD@gA\fi\ifinner\CD@gA\fi\CD@mE\CD@GI\else\CD@sE\CD@QB\else\CD@TA\fi
\fi\fi\fi\CD@dD}\def\CD@dD{\global\CD@NB\CD@wI\relax\CD@xE\global\CD@ID\else
\aftergroup\CD@mC\fi\if@ignore\aftergroup\ignorespaces\fi\CD@wC\ignorespaces}%
\def\CD@fB{\advance\CD@pI\dimen1\relax}\def\CD@eB{\advance\CD@pI.5\dimen1%
\relax}\def\CD@bB{}\def\CD@cB{\CD@fB\advance\CD@pI\CD@YB\divide\CD@pI2
\advance\CD@pI-\axisheight\relax}\def\CD@aA{}\def\CD@iB{\CD@kF\z@}\def\CD@AA{%
\ifdim\dimen2>\CD@kF\CD@kF\dimen2 \else\dimen2\CD@kF\CD@kI\dimen0
\advance
\CD@kI\dimen2 \fi}\def\CD@QB{\skip0\z@\relax\loop\skip1\lastskip\ifdim\skip1>%
\z@\unskip\advance\skip0\skip1
\repeat\vadjust{\prevdepth\dp\strutbox\penalty
\predisplaypenalty\vskip\abovedisplayskip\CD@UA\penalty\postdisplaypenalty
\vskip\belowdisplayskip}\ifdim\skip0=\z@\else\hskip\skip0
\global\@ignoretrue
\fi}\def\CD@TA{\CD@LG\kern-\displayindent\CD@UA\CD@LG\global\@ignoretrue}\def
\z@\CD@KB{wider than the page
by \the \dimen0
}\CD@HA\fi\CD@iE\hss\else\CD@V\CD@QA\CD@nA\fi\CD@GI\hss\kern-\wd1\box
\def\CD@GI{\CD@AF\CD@@F\else\CD@SC\global\CD@hG\fi\fi\kern\CD@QA\box2 }%
\def\CD@JF{\CD@cJ\ifdim\DiagramCellHeight=-\maxdimen
\DiagramCellHeight\CD@QC\fi\ifdim\DiagramCellWidth=-\maxdimen
\DiagramCellWidth\CD@QC\fi\global\CD@XC\CD@IF\let\CD@FE\empty\let\CD@z\CD@Q
\let\overprint\CD@eH\let\CD@s\CD@rJ\let\enddiagram\CD@ED\let\\\CD@cC\let\par
\CD@jH\let\CD@MD\empty\let\switch@arg\CD@PB\let\shift\CD@iA\baselineskip
\DiagramCellHeight\lineskip\z@\lineskiplimit\z@\mathsurround\z@\tabskip\z@
\CD@OB}\def\CD@VD{\penalty-123
\begingroup\CD@jA\aftergroup\CD@K\halign
\bgroup\global\advance\CD@NB1 \vadjust{\penalty1}\global\CD@FA\z@\CD@OB\CD@j#%
#\CD@DD\CD@Q\CD@Q\CD@OI\CD@j##\CD@DD\cr}\def\CD@ED{\CD@MD\CD@GD\crcr\egroup
\global\CD@JD\endgroup}\def\CD@j{\global\advance\CD@FA1
\futurelet\CD@EH\CD@i }\def\CD@i{\ifx\CD@EH\CD@DD\CD@tJ\hskip1sp
plus 1fil \relax\let\CD@DD\relax
\CD@vI\else\hfil\CD@k\objectstyle\let\CD@FE\CD@d\fi}\def\CD@DD{\CD@MD\relax
\CD@yI\CD@vI\global\CD@QA\CD@iA\penalty-9993
\CD@ND\hfil\null\kern-2\CD@QA
\null}\def\CD@cC{\cr}\def\across#1{\span\omit\mscount=#1
\global\advance
\CD@FA\mscount\global\advance\CD@FA\m@ne\CD@sF\ifnum\mscount>2
\CD@fJ\repeat
\ignorespaces}\def\CD@fJ{\relax\span\omit\advance\mscount\m@ne}\def\CD@qJ{%
\ifincommdiag\ifx\CD@iD\@fillh\ifx\CD@jD\@fillh\ifdim\dimen3>\z@\else\ifdim
\dimen2>93\CD@@I\ifdim\dimen2>18\p@\ifdim\CD@LF>\z@\count@\CD@bJ\advance
\count@\m@ne\ifnum\count@<\z@\count@20\let\CD@aJ\CD@uJ\fi\xdef\CD@bJ{\the
\count@}\fi\fi\fi\fi\fi\fi\fi}\def\CD@cG#1{\vrule\horizhtdp
width#1\dimen@
\kern2\dimen@}\def\CD@uJ{\rlap{\dimen@\CD@@I\CD@V\dimen@{.182\p@}\CD@zH
\dimen@\advance\CD@tI\dimen@\CD@cG0\CD@cG0\CD@cG2\CD@cG6\CD@cG6\CD@cG2\CD@cG0%
\CD@cG0\CD@cG2\CD@cG6\CD@cG0\CD@cG0\CD@cG2\CD@cG2\CD@cG6\CD@cG0\CD@cG0\CD@cG2%
\CD@cG6\CD@cG2\CD@cG2\CD@cG0\CD@cG0}}\def\CD@bJ{10}\def\CD@aJ{}\def\CD@XD{%
\CD@gE\CD@TB\fi\CD@x\CD@WF\CD@HI}\def\CD@x{\CD@QJ\CD@DC\CD@MJ\ifdim\CD@DC=\z@
\else\CD@pF\CD@DC\fi\ifvoid3
\setbox3=\null\ht3\CD@tI\dp3\CD@sI\else\CD@V{\ht
3}\CD@tI\CD@V{\dp3}\CD@sI\fi\dimen3=.5\wd3
\ifdim\dimen3=\z@\CD@tE\else\dimen
3-\CD@XH\fi\else\CD@TB\fi\CD@V{\dimen2}{\wd7}\CD@V{\dimen2}{\wd6}\CD@qJ
\advance\dimen2-2\dimen3 \dimen4.5\dimen2 \dimen2\dimen4 \advance\dimen2%
\CD@eJ\advance\dimen4-\CD@eJ\advance\dimen2-\wd1
\advance\dimen4-\wd5 \ifvoid 2
\else\CD@V{\ht3}{\ht2}\CD@V{\dp3}{\dp2}\CD@V{\dimen2}{\wd2}\fi\ifvoid4
\else
\CD@V{\ht3}{\ht4}\CD@V{\dp3}{\dp4}\CD@V{\dimen4}{\wd4}\fi\advance\skip2\dimen
2 \advance\skip4\dimen4 \CD@tE\advance\skip2\skip4 \dimen0\dimen5
\advance
\dimen0\wd5 \skip3-\skip4 \advance\skip3-\dimen0 \let\CD@jD\empty\else\skip3%
\z@\relax\dimen0\z@\fi}\def\CD@WF{\offinterlineskip\lineskip.2\CD@zC\ifvoid6
\else\setbox3=\vbox{\hbox to2\dimen3{\hss\box6\hss}\box3}\fi\ifvoid7
\else \setbox3=\vtop{\box3 \hbox
to2\dimen3{\hss\box7\hss}}\fi}\def\CD@HI{\kern \dimen1 \box1
\CD@aJ\CD@iD\hskip\skip2 \kern\dimen0 \ifincommdiag\CD@jE
\penalty1\fi\kern\dimen3 \penalty\CD@GB\hskip\skip3
\null\kern-\dimen3 \else \hskip\skip3 \fi\box3 \CD@jD\hskip\skip4
\box5 \kern\dimen5}\def\CD@MF{\ifnum
\CD@LH>\CD@TC\CD@V{\dimen1}\objectheight\CD@V{\dimen5}\objectheight\else\CD@V
{\dimen1}\objectwidth\CD@V{\dimen5}\objectwidth\fi}\def\CD@Y{\begingroup
\ifdim\dimen7=\z@\kern\dimen8 \else\ifdim\dimen6=\z@\kern\dimen9 \else\dimen5%
\dimen6 \dimen6\dimen9 \CD@KJ\dimen4\dimen2
\CD@dG{\dimen4}\dimen6\dimen5 \dimen7\dimen8
\CD@KJ\CD@iC{\dimen2}\ifdim\dimen2<\dimen4 \kern\dimen2 \else
\kern\dimen4
\fi\fi\fi\endgroup}\def\CD@jJ{\CD@JI\setbox\z@\hbox{\lower
\axisheight\hbox to\dimen2{\CD@DF\ifPositiveGradient\dimen8\ht\CD@MH\dimen9%
\CD@mI\else\dimen8\dp3 \dimen9\dimen1 \fi\else\dimen8
\ifPositiveGradient
\objectheight\else\z@\fi\dimen9\objectwidth\fi\advance\dimen8
\ifPositiveGradient-\fi\axisheight\CD@Y\unhbox\z@\CD@DF\ifPositiveGradient
\dimen8\dp3 \dimen9\dimen0
\else\dimen8\ht\CD@MH\dimen9\CD@mF\fi\else\dimen8
\ifPositiveGradient\z@\else\objectheight\fi\dimen9\objectwidth\fi\advance
\dimen8
\ifPositiveGradient\else-\fi\axisheight\CD@Y}}}\def\CD@bD{\dimen6
\CD@aK\DiagramCellHeight\dimen7 \CD@WK\DiagramCellWidth\CD@jJ
\ifPositiveGradient\advance\dimen7-\CD@ZK\DiagramCellWidth\else\dimen7
\CD@ZK
\DiagramCellWidth\dimen6\z@\fi\advance\dimen6-\CD@bK\DiagramCellHeight\CD@mK
\setbox0=\rlap{\kern-\dimen7
\lower\dimen6\box\z@}\ht0\z@\dp0\z@\raise \axisheight\box0
}\def\CD@mK{\setbox0\hbox{\ht\z@\z@\dp\z@\z@\wd\z@\z@\CD@hK
\expandafter\CD@tK{q \CD@eK\space\CD@lK\space\CD@kK\space\CD@eK\space0 0 cm}%
\else\global\CD@iG\CD@eD{\the\CD@TC\space\ifPositiveGradient\else-\fi\the
\CD@LH\space
bturn}\fi\box\z@\CD@gK}}\def\CD@vB{\advance\CD@hF-\CD@mI\CD@wJ
\CD@hF\advance\CD@wJ\CD@hI\ifvoid\CD@sH\ifdim\CD@wJ<.1em\ifnum\CD@gD=\@m\else
\CD@aG h\CD@wJ<.1em:objects
overprint:\CD@FA\CD@gD\fi\fi\else\ifhbox\CD@sH
\CD@SK\else\CD@TK\fi\advance\CD@wJ\CD@mI\CD@bH{-\CD@mI}{\box\CD@sH}{\CD@wJ}%
\z@\fi\CD@hF-\CD@mF\CD@gD\CD@FA\CD@hI\z@}\def\CD@SK{\setbox\CD@sH=\hbox{%
\unhbox\CD@sH\unskip\unpenalty}\setbox\CD@tH=\hbox{\unhbox\CD@tH\unskip
\unpenalty}\setbox\CD@sH=\hbox
to\CD@wJ{\CD@OA\wd\CD@sH\unhbox\CD@sH\CD@PA
\lastkern\unkern\ifdim\CD@PA=\z@\CD@UB\advance\CD@OA-\wd\CD@tH\else\CD@TB\fi
\ifnum\lastpenalty=\z@\else\CD@JA\unpenalty\fi\kern\CD@PA\ifdim\CD@hF<\CD@OA
\CD@JA\fi\ifdim\CD@hI<\wd\CD@tH\CD@JA\fi\CD@jE\CD@hI\CD@wJ\advance\CD@hI-%
\CD@OA\advance\CD@hI\wd\CD@tH\ifdim\CD@hI<2\wd\CD@tH\CD@aG
h\CD@hI<2\wd\CD@tH
:arrow too short:\CD@FA\CD@gD\fi\divide\CD@hI\tw@\CD@hF\CD@wJ\advance\CD@hF-%
\CD@hI\fi\CD@tE\kern-\CD@hI\fi\hbox
to\CD@hI{\unhbox\CD@tH}\CD@HG}}\CD@tG
\def\pile{\protect\CD@UJ\protect
\CD@uH}\def\CD@uH#1{\CD@l#1\CD@QD}\def\CD@UJ{\CD@nB{pile}\setbox0=\vtop
\bgroup\aftergroup\CD@lD\inpiletrue\let\CD@FE\empty\let\pile\CD@KF\let\CD@QD
\CD@PD\let\CD@GD\CD@FD\CD@yH\baselineskip.5\PileSpacing\lineskip.1\CD@zC
\relax\lineskiplimit\lineskip\mathsurround\z@\tabskip\z@\let\\\CD@wH}\def
\CD@rG\CD@NE{pile only allows one column.}%
\CD@rG\CD@UE{you left it
out!}\def\CD@R{\CD@QD\CD@Q\relax\CD@YA{missing \CD@yC \space
inserted after \string\pile}\CD@NE}\def\CD@PD{\CD@MD\crcr\egroup
\egroup}\def\CD@GD{\CD@MD}\def\CD@FD{\CD@MD\relax\CD@QD\CD@YA{missing
\CD@yC
\space inserted between \string\pile\space and \CD@HD}\CD@UE}\def\CD@QD{%
\CD@MD}\def\CD@lD{\vbox{\dimen1\dp0 \unvbox0 \setbox0=\lastbox\advance\dimen1%
\dp0 \nointerlineskip\box0 \nointerlineskip\setbox0=\null\dp0.5\dimen1\ht0-%
\dp0 \box0}\ifincommdiag\CD@tJ\penalty-9998 \fi\xdef\CD@YF{pile}}\def\CD@vH{%
\cr}\def\CD@wH{\noalign{\skip@\prevdepth\advance\skip@-\baselineskip
\prevdepth\skip@}}\def\CD@KF#1{#1}\def\CD@TK{\setbox\CD@sH=\vbox{\unvbox
\CD@sH\setbox1=\lastbox\setbox0=\box\voidb@x\CD@tF\setbox\CD@sH=\lastbox
\ifhbox\CD@sH\CD@rC\repeat\unvbox0
\global\CD@QA\CD@ZE}\CD@ZE\CD@QA}\def
\def\CD@gJ{\penalty7
\noindent\unhbox\CD@sH\unskip\setbox\CD@sH=\lastbox\unskip\unhbox\CD@sH
\endgraf\setbox\CD@tH=\lastbox\unskip\setbox\CD@tH=\hbox{\CD@JG\unhbox\CD@tH
\unskip\unskip\unpenalty}\ifcase\prevgraf\cd@shouldnt
P\or\ifdim\CD@wJ<\wd \CD@tH\CD@aG h\CD@wJ<\wd\CD@sH:object in pile
too wide:\CD@FA\CD@gD\setbox \CD@sH=\hbox
to\CD@wJ{\hss\unhbox\CD@tH\hss}\else\setbox\CD@sH=\hbox to\CD@wJ
{\hss\kern\CD@hF\unhbox\CD@tH\kern\CD@hI\hss}\fi\or\setbox\CD@sH=\lastbox
\unskip\CD@SK\else\cd@shouldnt
Q\fi\unskip\unpenalty}\def\CD@cD{\CD@MJ\ifvoid 3
\setbox3=\null\ht3\axisheight\dp3-\ht3
\dimen3.5\CD@LF\else\dimen4\dp3 \dimen3.5\wd3
\setbox3=\CD@GG{\box3}\dp3\dimen4 \ifdim\ht3=-\dp3 \else\CD@TB
\fi\fi\dimen0\dimen3
\advance\dimen0-.5\CD@LF\setbox0\null\ht0\ht3\dp0\dp3\wd 0\wd3
\ifvoid6\else\setbox6\hbox{\unhbox6\kern\dimen0\kern2pt}\dimen0\wd6
\fi
\ifvoid7\else\setbox7\hbox{\kern2pt\kern\dimen3\unhbox7}\dimen3\wd7
\fi \setbox3\hbox{\ifvoid6\else\kern-\dimen0\unhbox6\fi\unhbox3
\ifvoid7\else \unhbox7\kern-\dimen3\fi}\ht3\ht0\dp3\dp0\wd3\wd0
\CD@tE\dimen4=\ht\CD@MH \advance\dimen4\dp5 \advance\dimen4\dimen1
\let\CD@jD\empty\else\dimen4\ht3 \fi\setbox0\null\ht0\dimen4
\offinterlineskip\setbox8=\vbox spread2ex{\kern
\dimen5 \box1 \CD@iD\vfill\CD@tE\else\kern\CD@eJ\fi\box0}\ht8=\z@\setbox9=%
\vtop spread2ex{\kern-\ht3 \kern-\CD@eJ\box3 \CD@jD\vfill\box5 \kern\dimen1}%
\dp9=\z@\hskip\dimen0plus.0001fil \box9 \kern-\CD@LF\box8
\CD@kE\penalty2 \fi \CD@tE\penalty1
\fi\kern\PileSpacing\kern-\PileSpacing\kern-.5\CD@LF\penalty
\CD@GB\null\kern\dimen3}\def\CD@cI{\ifhbox\CD@VA\CD@KB{clashing
verticals}\ht \CD@MH.5\dp\CD@VA\dp\CD@MH-\ht5
\CD@yB\ht\CD@MH\z@\dp\CD@MH\z@\fi\dimen1\dp
\CD@VA\CD@xA\prevgraf\unvbox\CD@VA\CD@wA\lastpenalty\unpenalty\setbox\CD@VA=%
\null\setbox\CD@lI=\hbox{\CD@JG\unhbox\CD@lI\unskip\unpenalty\dimen0\lastkern
\unkern\unkern\unkern\kern\dimen0
\CD@HG}\setbox\CD@lF=\hbox{\unhbox\CD@lF
\dimen0\lastkern\unkern\unkern\global\CD@QA\lastkern\unkern\kern\dimen0 }%
\CD@tF\ifnum\CD@xA>4
\CD@zI\repeat\unskip\unskip\advance\CD@mF.5\wd\CD@VA
\advance\CD@mF\wd\CD@lF\advance\CD@mI.5\wd\CD@VA\advance\CD@mI\wd\CD@lI\ifnum
\CD@FA=\CD@lA\CD@OA.5\wd\CD@VA\edef\CD@NK{\the\CD@OA}\fi\setbox\CD@VA=\hbox{%
\kern-\CD@mF\box\CD@lF\unhbox\CD@VA\box\CD@lI\kern-\CD@mI\penalty\CD@wA
\penalty\CD@NB}\ht\CD@VA\dimen1 \dp\CD@VA\z@\wd\CD@VA\CD@tB\CD@vB}\def\CD@zI{%
\ifdim\wd\CD@lF<\CD@QA\setbox\CD@lF=\hbox
to\CD@QA{\CD@JG\unhbox\CD@lF}\fi
\advance\CD@xA\m@ne\setbox\CD@VA=\hbox{\box\CD@lF\unhbox\CD@VA}\unskip\setbox
\CD@lF=\lastbox\setbox\CD@lF=\hbox{\unhbox\CD@lF\unskip\unpenalty\dimen0%
\lastkern\unkern\unkern\global\CD@QA\lastkern\unkern\kern\dimen0
}}\def\CD@yB
\def\CD@zB{\unvbox\CD@VA
\CD@wA\lastpenalty\unpenalty\ifdim\dimen1<\ht\CD@MH\CD@aG v\dimen1<\ht\CD@MH:%
rows overprint:\CD@NB\CD@wA\fi}\def\CD@xB{\dimen0=\ht\CD@VA\setbox\CD@VA=%
\hbox\bgroup\advance\dimen1-\ht\CD@MH\unhbox\CD@VA\CD@xA\lastpenalty
\unpenalty\CD@wA\lastpenalty\unpenalty\global\CD@RA-\lastkern\unkern\setbox0=%
\lastbox\CD@tF\setbox\CD@VA=\hbox{\box0\unhbox\CD@VA}\setbox0=\lastbox\ifhbox
0
\CD@kJ\repeat\global\CD@SA-\lastkern\unkern\global\CD@QA\CD@JK\unhbox\CD@VA
\egroup\CD@JK\CD@QA\CD@bH{\CD@SA}{\box\CD@VA}{\CD@RA}{\dimen1}}\def\CD@kJ{%
\setbox0=\hbox to\wd0\bgroup\unhbox0
\unskip\unpenalty\dimen7\lastkern\unkern \ifnum\lastpenalty=1
\unpenalty\CD@UB\else\CD@TB\fi\ifnum\lastpenalty=2
\unpenalty\dimen2.5\dimen0\advance\dimen2-.5\dimen1\advance\dimen2-%
\axisheight\else\dimen2\z@\fi\setbox0=\lastbox\dimen6\lastkern\unkern\setbox1%
=\lastbox\setbox0=\vbox{\unvbox0 \CD@tE\kern-\dimen1
\else\ifdim\dimen2=\z@ \else\kern\dimen2 \fi\fi}\ifdim\dimen0<\ht0
\CD@aG v\dimen0<\ht0:upper part of
vertical too short:{\CD@tE\CD@NB\else\CD@wA\fi}\CD@xA\else\setbox0=\vbox to%
\dimen0{\unvbox0}\fi\setbox1=\vtop{\unvbox1}\ifdim\dimen1<\dp1
\CD@aG v\dimen
1<\dp1:lower part of vertical too short:\CD@NB\CD@wA\else\setbox1=\vtop to%
\dimen1{\ifdim\dimen2=\z@\else\kern-\dimen2 \fi\unvbox1 }\fi\box1
\kern\dimen
6 \box0 \kern\dimen7 \CD@HG\global\CD@QA\CD@JK\egroup\CD@JK\CD@QA\relax}%
\let\CD@LB
\let\CD@mA\CD@XB\newcount\CD@MB\CD@tG
\def\CD@nD{-1}\def\CD@K{\ifnum\CD@nD<\z@\else
\begingroup\scrollmode\showboxdepth\CD@nD\showboxbreadth\maxdimen\showlists
\endgroup\fi\CD@bI\CD@zF\CD@CA=\CD@u\advance\CD@CA1 \CD@XB=\CD@CA\ifnum\CD@NB
=1
\CD@JA\fi\advance\CD@XB\CD@NB\dimen1\z@\skip0\z@\count@=\insc@unt\advance
\count@\CD@u\divide\count@2 \ifnum\CD@XB>\count@\CD@KB{The diagram
has too
many rows! It can't be reformatted.}\else\CD@NG\CD@WI\fi\CD@cH}\def\CD@NG{%
\CD@NB\CD@CA\CD@uF\ifnum\CD@NB<\CD@XB\setbox\CD@NB\box\voidb@x\advance\CD@NB1%
\relax\repeat\CD@NB\CD@CA\skip\z@\z@\CD@uF\CD@GB\lastpenalty\unpenalty\ifnum
\CD@GB>\z@\CD@KE\repeat\ifnum\CD@GB=-123 \CD@tJ\unpenalty\else\cd@shouldnt D%
\fi\ifx\v@grid\relax\else\CD@NB\CD@XB\advance\CD@NB\m@ne\expandafter\CD@VJ
\v@grid\fi\CD@MB\CD@mA\CD@tB\z@\CD@XG\ifx\h@grid\relax\else\expandafter\CD@LJ
\h@grid\fi\count@\CD@XB\advance\count@\m@ne\CD@YB\ht\count@}\def\CD@KE{%
\ifcase\CD@GB\or\CD@MG\else\CD@uA-\lastpenalty\unpenalty\CD@vA\lastpenalty
\unpenalty\setbox0=\lastbox\CD@WG\fi\CD@wD}\def\CD@wD{\skip1\lastskip\unskip
\advance\skip0\skip1 \ifdim\skip1=\z@\else\expandafter\CD@wD\fi}\def\CD@MG{%
\setbox0=\lastbox\CD@pI\dp0
\advance\CD@pI\skip\z@\skip\z@\z@\advance\CD@NF
\CD@pI\CD@uE\ifnum\CD@NB>\CD@CA\CD@NF\DiagramCellHeight\CD@pI\CD@NF\advance
\CD@pI-\CD@qI\fi\fi\CD@qI\ht0
\CD@NF\CD@qI\setbox\CD@NB\hbox{\unhbox\CD@NB
\unhbox0}\dp\CD@NB\CD@pI\ht\CD@NB\CD@qI\advance\CD@NB1
}\def\CD@WG{\ifnum
\CD@uA<\z@\advance\CD@uA\CD@XB\ifnum\CD@uA<\CD@CA\CD@UG\else\CD@OA\dp\CD@uA
\CD@PA\ht\CD@uA\setbox\CD@uA\hbox{\box\z@\penalty\CD@vA\penalty\CD@GB\unhbox
\CD@uA}\dp\CD@uA\CD@OA\ht\CD@uA\CD@PA\fi\else\CD@UG\fi}\def\CD@UG{\CD@KB{%
diagonal goes outside diagram
(lost)}}\def\CD@fI{\advance\CD@uA\CD@XB\ifnum
\CD@uA<\CD@CA\CD@UG\else\ifnum\CD@uA=\CD@NB\CD@VG\else\ifnum\CD@uA>\CD@NB
\cd@shouldnt
M\else\CD@OA\dp\CD@uA\CD@PA\ht\CD@uA\setbox\CD@uA\hbox{\box\z@
\penalty\CD@vA\penalty\CD@GB\unhbox\CD@uA}\dp\CD@uA\CD@OA\ht\CD@uA\CD@PA\fi
\fi\fi}\def\CD@WI{\CD@t\CD@AJ\setbox\CD@PC=\hbox{\CD@k A\@super f\CD@lJ f%
\CD@ND}\CD@ZE\z@\CD@JK\z@\CD@kI\z@\CD@kF\z@\CD@NB=\CD@XB\CD@NF\z@\CD@uB\z@
\CD@uF\ifnum\CD@NB>\CD@CA\advance\CD@NB\m@ne\CD@qI\ht\CD@NB\CD@pI\dp\CD@NB
\advance\CD@NF\CD@qI\CD@rI\advance\CD@uB\CD@NF\CD@KC\CD@ZI\CD@w\ht\CD@NB
\CD@qI\dp\CD@NB\CD@pI\nointerlineskip\box\CD@NB\CD@NF\CD@pI\setbox\CD@NB\null
\ht\CD@NB\CD@uB\repeat\CD@wB\nointerlineskip\box\CD@NB\CD@gG\CD@ZE
\DiagramCellWidth{width}\CD@gG\CD@JK\DiagramCellHeight{height}\CD@VA\CD@LB
\advance\CD@VA-\CD@lA\advance\CD@VA\m@ne\advance\CD@VA\CD@mA\dimen0\wd\CD@VA
\CD@tI\axisheight\dimen1\CD@uB\advance\dimen1-\CD@YB\dimen2\CD@kI\advance
\dimen2-\dimen0
\advance\CD@XB-\CD@CA\advance\CD@LB-\CD@lA}\count@\year
\iftrue \message{gone February
2021!}\repeat\fi\def\CD@wB{\CD@qI-\CD@NF\CD@pI\CD@NF
\setbox\CD@MH=\null\dp\CD@MH\CD@NF\ht\CD@MH-\CD@NF\CD@mF\z@\CD@mI\z@\CD@lA
\CD@LB\advance\CD@lA-\CD@MB\advance\CD@lA\CD@mA\CD@FA\CD@LB\CD@VA\CD@MB\CD@sF
\ifnum\CD@FA>\CD@lA\advance\CD@FA\m@ne\advance\CD@VA\m@ne\CD@tB\wd\CD@VA
\setbox\CD@FA=\box\voidb@x\CD@yB\repeat\CD@w\ht\CD@NB\CD@qI\dp\CD@NB\CD@pI}%
\def\CD@gG#1#2#3{\ifdim#1>.01\CD@zC\CD@PA#2\relax\advance\CD@PA#1\relax
\advance\CD@PA.99\CD@zC\count@\CD@PA\divide\count@\CD@zC\CD@KB{increase cell #%
3 to \the\count@ em}\fi}\def\CD@rI{\CD@FA=\CD@LB\penalty4
\noindent\unhbox \CD@NB\CD@sF\unskip\setbox0=\lastbox\ifhbox0
\advance\CD@FA\m@ne\setbox\CD@FA \hbox
to\wd0{\null\penalty-9990\null\unhbox0}\repeat\CD@lA\CD@FA\advance
\CD@FA\CD@MB\advance\CD@FA-\CD@mA\ifnum\CD@FA<\CD@LB\count@\CD@FA\advance
\count@\m@ne\dimen0=\wd\count@\count@\CD@MB\advance\count@\m@ne\CD@tB\wd
\count@\CD@sF\ifnum\CD@FA<\CD@LB\CD@DJ\CD@XG\dimen0\wd\CD@FA\advance\CD@FA1
\repeat\fi\CD@sF\CD@GB\lastpenalty\unpenalty\ifnum\CD@GB>\z@\CD@vA
\lastpenalty\unpenalty\CD@VG\repeat\endgraf\unskip\ifnum\lastpenalty=4
\unpenalty\else\cd@shouldnt
S\fi}\def\CD@VG{\advance\CD@vA\CD@lA\advance
\CD@vA\m@ne\setbox0=\lastbox\ifnum\CD@vA<\CD@LB\setbox\CD@vA\hbox{\box0%
\penalty\CD@GB\unhbox\CD@vA}\else\CD@UG\fi}\def\CD@bG{}\CD@tG\CD@uE\CD@WB
\def\CD@DJ{\advance\dimen0\wd\CD@FA\divide\dimen0\tw@\CD@uE\dimen0%
\DiagramCellWidth\else\CD@V{\dimen0}\DiagramCellWidth\CD@pJ\fi\advance\CD@tB
\dimen0
}\def\CD@XG{\setbox\CD@MB=\vbox{}\dp\CD@MB=\CD@uB\wd\CD@MB\CD@tB
\advance\CD@MB1
}\def\CD@LJ#1,{\def\CD@GK{#1}\ifx\CD@GK\CD@RD\else\advance
\CD@tB\CD@GK\DiagramCellWidth\CD@XG\expandafter\CD@LJ\fi}\def\CD@VJ#1,{\def
\CD@GK{#1}\ifx\CD@GK\CD@RD\else\ifnum\CD@NB>\CD@CA\CD@NF\CD@GK
\DiagramCellHeight\advance\CD@NF-\dp\CD@NB\advance\CD@NB\m@ne\ht\CD@NB\CD@NF
\fi\expandafter\CD@VJ\fi}\def\CD@pJ{\CD@wE\CD@OA\dimen0 \advance\CD@OA-%
\DiagramCellWidth\ifdim\CD@OA>2\MapShortFall\CD@KB{badly drawn
diagonals (see
manual)}\let\CD@pJ\empty\fi\else\let\CD@pJ\empty\fi}\def\CD@KC{\CD@VA\CD@mA
\CD@sF\ifnum\CD@VA<\CD@MB\dimen0\dp\CD@VA\advance\dimen0\CD@NF\dp\CD@VA\dimen
0 \advance\CD@VA1 \repeat}\def\CD@bH#1#2#3#4{\ifnum\CD@FA<\CD@LB\CD@OA=#1%
\relax\setbox\CD@FA=\hbox{\setbox0=#2\dimen7=#4\relax\dimen8=#3\relax\ifhbox
\CD@FA\unhbox\CD@FA\advance\CD@OA-\lastkern\unkern\fi\ifdim\CD@OA=\z@\else
\kern-\CD@OA\fi\raise\dimen7\box0 \kern-\dimen8
}\ifnum\CD@FA=\CD@lA\CD@V \CD@kF\CD@OA\fi\else\cd@shouldnt
O\fi}\def\CD@w{\setbox\CD@NB=\hbox{\CD@FA
\CD@lA\CD@VA\CD@mA\CD@PA\z@\relax\CD@sF\ifnum\CD@FA<\CD@LB\CD@tB\wd\CD@VA
\relax\CD@eI\advance\CD@FA1 \advance\CD@VA1
\repeat}\CD@V\CD@kI{\wd\CD@NB}\wd
\CD@NB\z@}\def\CD@eI{\ifhbox\CD@FA\CD@OA\CD@tB\relax\advance\CD@OA-\CD@PA
\relax\ifdim\CD@OA=\z@\else\kern\CD@OA\fi\CD@PA\CD@tB\advance\CD@PA\wd\CD@FA
\relax\unhbox\CD@FA\advance\CD@PA-\lastkern\unkern\fi}\def\CD@ZI{\setbox
\CD@sH=\box\voidb@x\CD@VA=\CD@MB\CD@FA\CD@LB\CD@VA\CD@mA\advance\CD@VA\CD@FA
\advance\CD@VA-\CD@lA\advance\CD@VA\m@ne\CD@tB\wd\CD@VA\count@\CD@LB\advance
\count@\m@ne\CD@hF.5\wd\count@\advance\CD@hF\CD@tB\CD@A\m@ne\CD@gD\@m\CD@sF
\ifnum\CD@FA>\CD@lA\advance\CD@FA\m@ne\advance\CD@hF-\CD@tB\CD@PI\wd\CD@VA
\CD@tB\advance\CD@hF\CD@tB\advance\CD@VA\m@ne\CD@tB\wd\CD@VA\repeat\CD@mF
\CD@kF\CD@mI-\CD@mF\CD@vB}\newcount\CD@GB\def\CD@s{}\def\CD@t{\mathsurround
\z@\hsize\z@\rightskip\z@ plus1fil
minus\maxdimen\parfillskip\z@\linepenalty 9000 \looseness0
\hfuzz\maxdimen\hbadness10000 \clubpenalty0 \widowpenalty0
\displaywidowpenalty0 \interlinepenalty0 \predisplaypenalty0
\postdisplaypenalty0 \interdisplaylinepenalty0
\interfootnotelinepenalty0 \floatingpenalty0 \brokenpenalty0
\everypar{}\leftskip\z@\parskip\z@
\parindent\z@\pretolerance10000 \tolerance10000 \hyphenpenalty10000
\exhyphenpenalty10000 \binoppenalty10000 \relpenalty10000
\adjdemerits0 \doublehyphendemerits0 \finalhyphendemerits0
\baselineskip\z@\CD@IA\prevdepth
\z@}\newbox\CD@KG\newbox\CD@IG\def\CD@JG{\unhcopy\CD@KG}\def\CD@HG{\unhcopy
\CD@IG}\def\CD@iJ{\hbox{}\penalty1\nointerlineskip}\def\CD@PI{\penalty5
\noindent\setbox\CD@MH=\null\CD@mF\z@\CD@mI\z@\ifnum\CD@FA<\CD@LB\ht\CD@MH\ht
\CD@FA\dp\CD@MH\dp\CD@FA\unhbox\CD@FA\skip0=\lastskip\unskip\else\CD@OK\skip0%
=\z@\fi\endgraf\ifcase\prevgraf\cd@shouldnt Y \or\cd@shouldnt Z
\or\CD@RI\or
\CD@XI\else\CD@QI\fi\unskip\setbox0=\lastbox\unskip\unskip\unpenalty\noindent
\unhbox0\setbox0\lastbox\unpenalty\unskip\unskip\unpenalty\setbox0\lastbox
\CD@tF\CD@GB\lastpenalty\unpenalty\ifnum\CD@GB>\z@\setbox\z@\lastbox\CD@lB
\repeat\endgraf\unskip\unskip\unpenalty}\def\CD@YJ{\CD@uA\CD@XB\advance\CD@uA
-\CD@NB\CD@vA\CD@FA\advance\CD@vA-\CD@lA\advance\CD@vA1 \expandafter\message{%
prevgraf=\the\prevgraf at
(\the\CD@uA,\the\CD@vA)}}\def\CD@XI{\CD@CE\setbox
\CD@lI=\lastbox\setbox\CD@lI=\hbox{\unhbox\CD@lI\unskip\unpenalty}\unskip
\ifdim\ht\CD@lI>\ht\CD@PC\setbox\CD@MH=\copy\CD@lI\else\ifdim\dp\CD@lI>\dp
\CD@PC\setbox\CD@MH=\copy\CD@lI\else\CD@FG\CD@lI\fi\fi\advance\CD@mF.5\wd
\CD@lI\advance\CD@mI.5\wd\CD@lI\setbox\CD@lI=\hbox{\unhbox\CD@lI\CD@HG}\CD@bH
\CD@mF{\box\CD@lI}\CD@mI\z@\CD@yB\CD@vB}\def\CD@CE{\ifnum\CD@A>0
\advance \dimen0-\CD@tB\CD@iA-.5\dimen0 \CD@A-\CD@A\else\CD@A0
\CD@iA\z@\fi\setbox
\CD@MH=\lastbox\setbox\CD@MH=\hbox{\unhbox\CD@MH\unskip\unskip\unpenalty
\setbox0=\lastbox\global\CD@QA\lastkern\unkern}\advance\CD@iA-.5\CD@QA\unskip
\setbox\CD@MH=\null\CD@mI\CD@iA\CD@mF-\CD@iA}\def\CD@Z{\ht\CD@MH\CD@tI\dp
\CD@MH\CD@sI}\def\CD@FG#1{\setbox\CD@MH=\hbox{\CD@V{\ht\CD@MH}{\ht#1}\CD@V{%
\dp\CD@MH}{\dp#1}\CD@V{\wd\CD@MH}{\wd#1}\vrule height\ht\CD@MH
depth\dp\CD@MH
width\wd\CD@MH}}\def\CD@QI{\CD@CE\CD@Z\setbox\CD@lI=\lastbox\unskip\setbox
\CD@lF=\lastbox\unskip\setbox\CD@lF=\hbox{\unhbox\CD@lF\unskip\global\CD@yA
\lastpenalty\unpenalty}\advance\CD@yA9999
\ifcase\CD@yA\CD@VI\or\CD@YI\or
\CD@TI\or\CD@dI\or\CD@cI\or\CD@SI\else\cd@shouldnt9\fi}\def\CD@VI{\CD@FG
\CD@lI\CD@UI\setbox\CD@sH=\box\CD@lF\setbox\CD@tH=\box\CD@lI}\def\CD@YI{%
\CD@FG\CD@lF\setbox\CD@lI\hbox{\penalty8
\unhbox\CD@lI\unskip\unpenalty\ifnum \lastpenalty=8
\else\CD@xH\fi}\CD@UI\setbox\CD@lF=\hbox{\unhbox\CD@lF\unskip
\unpenalty\global\setbox\CD@DA=\lastbox}\ifdim\wd\CD@lF=\z@\else\CD@xH\fi
\setbox\CD@sH=\box\CD@DA}\def\CD@xH{\CD@KB{extra material in
\string\pile \space cell
(lost)}}\def\CD@UI{\CD@yB\ifvoid\CD@sH\else\CD@KB{Clashing
horizontal
arrows}\CD@mI.5\CD@hF\CD@mF-\CD@mI\CD@vB\CD@mI\z@\CD@mF\z@\fi
\CD@hI\CD@hF\advance\CD@hI-\CD@mI\CD@hF-\CD@mF\CD@JC\CD@FA}\def\CD@RI{\setbox
0\lastbox\unskip\CD@iA\z@\CD@Z\ifdim\skip0>\z@\CD@tJ\CD@A0
\else\ifnum\CD@A<1
\CD@A0 \dimen0\CD@tB\fi\advance\CD@A1 \fi}\def\VonH{\CD@MA46\VonH{.5\CD@LF}}%
\def\HonV{\CD@MA57\HonV{.5\CD@LF}}\def\HmeetV{\CD@MA44\HmeetV{-\MapShortFall}%
}\def\CD@MA#1#2#3#4{\CD@pB34#1{\string#3}\CD@SD\CD@GB-999#2
\dimen0=#4\CD@tI
\dimen0\advance\CD@tI\axisheight\CD@sI\dimen0\advance\CD@sI-\axisheight\CD@CF
\CD@HC\CD@ZD}\def\CD@HC#1{\setbox0=\hbox{\CD@k#1\CD@ND}\dimen0.5\wd0
\CD@tI
\ht0 \CD@sI\dp0 \CD@ZD}\def\CD@SD{\setbox0=\null\ht0=\CD@tI\dp0=\CD@sI\wd0=%
\dimen0 \copy0\penalty\CD@GB\box0
}\def\CD@TI{\CD@GC\CD@yB}\def\CD@dI{\CD@GC
\CD@vB}\def\CD@SI{\CD@GC\CD@yB\CD@vB}\def\CD@GC{\setbox\CD@lI=\hbox{\unhbox
\CD@lI}\setbox\CD@lF=\hbox{\unhbox\CD@lF\global\setbox\CD@DA=\lastbox}\ht
\CD@MH\ht\CD@DA\dp\CD@MH\dp\CD@DA\advance\CD@mF\wd\CD@DA\advance\CD@mI\wd
\CD@lI}\CD@tG\ifPositiveGradient\CD@CI\CD@BI\CD@CI\CD@tG\ifClimbing\CD@rB
\def\CD@qF{\CD@KH\ifPositiveGradient/\else\CD@k\backslash\CD@ND\fi}%
\def\CD@qF{\CD@rF\char\count@}\fi\let\CD@rF\tenln\def\Use@line@char#1{%
\hbox{#1\CD@rF\ifPositiveGradient\else\advance\count@64
\fi\char\count@}}\def
\def\CD@ZF{\Use@line@char{\ifcase\DiagonalChoice\CD@gF\or
\CD@fF\or\CD@fF\else\CD@gF\fi}}\def\CD@gF{\ifnum\CD@TC=\z@\count@'33
\else
\count@\CD@TC\multiply\count@\sixt@@n\advance\count@-9\advance\count@\CD@LH
\advance\count@\CD@LH\fi}\def\CD@fF{\count@'\ifcase\CD@LH55\or\ifcase\CD@TC66%
\or22\or52\or61\or72\fi\or\ifcase\CD@TC66\or25\or22\or63\or52\fi\or\ifcase
\CD@TC66\or16\or36\or22\or76\fi\or\ifcase\CD@TC66\or27\or25\or67\or22\fi\fi
\relax}\def\CD@uC#1{\hbox{#1\setbox0=\Use@line@char{#1}\ifPositiveGradient
\else\raise.3\ht0\fi\copy0 \kern-.7\wd0
\ifPositiveGradient\raise.3\ht0\fi
\box0}}\def\CD@jF#1{\hbox{\setbox0=#1\kern-.75\wd0 \vbox to.25\ht0{%
\ifPositiveGradient\else\vss\fi\box0 \ifPositiveGradient\vss\fi}}}\def\CD@jI#%
\def\CD@tC#1#2{\vbox to#1{\vss\hbox to#%
2{\hss.\hss}\vss}}\def\hfdot{\CD@tC{2\axisheight}{.5em}}%
\def\vfdot{\CD@tC{1ex}\z@}%% % 1.46ex until 29.7.98
\def\CD@bF{\hbox{\dimen0=.3\CD@zC\dimen1\dimen0 \ifnum\CD@LH>\CD@TC\CD@iC{%
\dimen1}\else\CD@dG{\dimen0}\fi\CD@tC{\dimen0}{\dimen1}}}\newarrowfiller{.}%
\def\dfdot{\CD@bF\CD@CK}\CD@RC{+f:.}{\dfdot}\CD@RC{-f%
\def\CD@@K#1{\hbox\bgroup\def\CD@CH{#1\egroup}\afterassignment
\CD@CH%%
\count@='}\def\lnchar{\CD@@K\CD@qF}\def\CD@dF#1{\setbox#1=\hbox{\dimen5\dimen
#1 \setbox8=\box#1 \dimen1\wd8 \count@\dimen5 \divide\count@\dimen1
\ifnum \count@=0 \box8 \ifdim\dimen5<.95\dimen1 \CD@gB{diagonal line
too short}\fi
\else\dimen3=\dimen5 \advance\dimen3-\dimen1 \divide\dimen3\count@\dimen4%
\dimen3 \CD@dG{\dimen4}\ifPositiveGradient\multiply\dimen4\m@ne\fi\dimen6%
\dimen1 \advance\dimen6-\dimen3 \loop\raise\count@\dimen4\copy8
\ifnum\count@
>0 \kern-\dimen6 \advance\count@\m@ne\repeat\fi}}\def\CD@CG#1{\CD@EF\CD@xJ{#1%
}\else\CD@dF{#1}\fi}\def\CD@IH#1{}\newdimen\objectheight\objectheight1.8ex
\newdimen\objectwidth\objectwidth1em \def\CD@YD{\dimen6=\CD@aK
\DiagramCellHeight\dimen7=\CD@WK\DiagramCellWidth\CD@KJ\ifnum\CD@LH>0
\ifnum
\CD@TC>0 \CD@aF\else\aftergroup\CD@VC\fi\else\aftergroup\CD@UC\fi}\def\CD@VC{%
\CD@YA{diagonal map is nearly
vertical}\CD@NA}\def\CD@UC{\CD@YA{diagonal map is nearly
horizontal}\CD@NA}\CD@rG\CD@NA{Use an orthogonal map instead}\def
\axisheight\CD@iC{\dimen8%
}\CD@X{\dimen8}{.5\wd3}\dimen9\dp3\advance\dimen9\axisheight\CD@iC{\dimen9}%
\else\CD@CG{2}\CD@CG{4}\ifPositiveGradient\dimen2-\dimen0%
\fi\rlap{\unhbox1}\fi\raise
\def\NorthWest{\CD@BI \CD@rB\DiagonalChoice0
}\def\NorthEast{\CD@CI\CD@rB\DiagonalChoice1 }\def
\def\SouthEast{\CD@BI\CD@qB
\DiagonalChoice2 }\def\CD@aD{\vadjust{\CD@uA\CD@FA\advance\CD@uA
\ifPositiveGradient\else-\fi\CD@ZK\relax\CD@vA\CD@NB\advance\CD@vA-\CD@bK
\relax\hbox{\advance\CD@uA\ifPositiveGradient-\fi\CD@WK\advance\CD@vA\CD@aK
\hbox{\box6 \kern\CD@DC\kern\CD@eJ\penalty1 \box7
\box\z@}\penalty\CD@uA
\penalty\CD@vA}\penalty\CD@uA\penalty\CD@vA\penalty104}}\def\CD@eH#1{\relax
\vadjust{\hbox@maths{#1}\penalty\CD@FA\penalty\CD@NB\penalty\tw@}}\def\CD@lB{%
\ifcase\CD@GB\or\or\CD@bH{.5\wd0}{\box0}{.5\wd0}\z@\or\unhbox\z@\setbox\z@
\lastbox\CD@bH{.5\wd0}{\box0}{.5\wd0}\z@\unpenalty\unpenalty\setbox\z@
\lastbox\or\CD@TG\else\advance\CD@GB-100
\ifnum\CD@GB<\z@\cd@shouldnt B\fi
\setbox\z@\hbox{\kern\CD@mF\copy\CD@MH\kern\CD@mI\CD@uA\CD@XB\advance\CD@uA-%
\CD@NB\penalty\CD@uA\CD@uA\CD@FA\advance\CD@uA-\CD@lA\penalty\CD@uA\unhbox\z@
\global\CD@yA\lastpenalty\unpenalty\global\CD@zA\lastpenalty\unpenalty}\CD@uA
-\CD@yA\CD@vA\CD@zA\CD@fI\fi}\def\CD@TG{\unhbox\z@\setbox\z@\lastbox\CD@uA
\lastpenalty\unpenalty\advance\CD@uA\CD@mA\CD@vA\CD@XB\advance\CD@vA-%
\lastpenalty\unpenalty\dimen1\lastkern\unkern\setbox3\lastbox\dimen0\lastkern
\unkern\setbox0=\hbox to\z@{\unhbox0\setbox0\lastbox\setbox7\lastbox
\unpenalty\CD@eJ\lastkern\unkern\CD@DC\lastkern\unkern\setbox6\lastbox\dimen7%
\CD@tB\advance\dimen7-\wd\CD@uA\ifdim\dimen7<\z@\CD@CI\multiply\dimen7\m@ne
\let\mv\empty\else\CD@BI\def\mv{\raise\ht1}\kern-\dimen7 \fi\ifnum\CD@vA>%
\CD@NB\dimen6\CD@uB\advance\dimen6-\ht\CD@vA\else\dimen6\z@\fi\CD@jJ\CD@mK
\setbox1\null\ht1\dimen6\wd1\dimen7 \dimen7\dimen2 \dimen6\wd1
\CD@KJ\CD@uA \CD@LH\CD@vA\CD@TC\dimen6\ht1
\CD@KJ\setbox2\null\divide\dimen2\tw@\advance
\dimen2\CD@eJ\CD@eG{\dimen2}\wd2\dimen2 \dimen0.5\dimen7 \advance\dimen0%
\ifPositiveGradient\else-\fi\CD@eJ\CD@dG{\dimen0}\advance\dimen0-\axisheight
\ht2\dimen0
\dimen0\CD@DC\CD@eG{\dimen0}\advance\dimen0\ht2\ht2\dimen0 \dimen
0\ifPositiveGradient-\fi\CD@DC\CD@dG{\dimen0}\advance\dimen0\wd2\wd2\dimen0
\setbox4\null\dimen0 .6\CD@zC\CD@eG{\dimen0}\ht4\dimen0 \dimen0
.2\CD@zC \CD@dG{\dimen0}\wd4\dimen0 \dimen0\wd2
\ifvoid6\else\dimen1\ht4 \advance
\dimen1\ht2 \CD@CC6+-\raise\dimen1\rlap{\ifPositiveGradient\advance\dimen0-%
\wd6\advance\dimen0-\wd4 \else\advance\dimen0\wd4
\fi\kern\dimen0\box6}\fi \dimen0\wd2 \ifvoid7\else\dimen1\ht4
\advance\dimen1-\ht2 \CD@CC7-+\lower
\dimen1\rlap{\ifPositiveGradient\advance\dimen0\wd4 \else\advance\dimen0-\wd7%
\advance\dimen0-\wd4
\fi\kern\dimen0\box7}\fi\mv\box0\hss}\ht0\z@\dp0\z@
\CD@bH{\z@}{\box\z@}{\z@}{\axisheight}}\def\CD@CC#1#2#3{\dimen4.5\wd#1
\ifdim
\dimen4>.25\dimen7\dimen4=.25\dimen7\fi\ifdim\dimen4>\CD@zC\dimen4.4\dimen4
\advance\dimen4.6\CD@zC\fi\CD@eG{\dimen4}\dimen5\axisheight\CD@dG{\dimen5}%
\advance\dimen4-\dimen5 \dimen5\dimen4\CD@eG{\dimen5}\advance\dimen0%
\ifPositiveGradient#2\else#3\fi\dimen5
\CD@dG{\dimen4}\advance\dimen1\dimen4 }
\def\CD@eD#1{\expandafter\CD@IK{#1}}\CD@ZA\CD@EK{output is PostScript
dependent}\def\CD@SC{\CD@IK{/bturn {gsave currentpoint currentpoint
translate
4 2 roll neg exch atan rotate neg exch neg exch translate } def /eturn {%
currentpoint grestore moveto}
def}}\def\CD@gK{\relax\CD@hK\CD@tK{Q}\else \CD@IK{eturn}\fi}
\def\CD@OJ#1{\count@#1\relax\multiply\count@7\advance
\count@16577\divide\count@33154
}\def\CD@fD#1{\expandafter\special{#1}} \def
\CD@LF\CD@fD{pn \the\count@}\CD@fD{pa 0 0}\CD@OJ{\dimen#%
\def\CD@JI{%
\CD@KJ\begingroup\ifdim\dimen7<\dimen6 \dimen2=\dimen6
\dimen6=\dimen7 \dimen 7=\dimen2
\count@\CD@LH\CD@LH\CD@TC\CD@TC\count@\else\dimen2=\dimen7 \fi
\ifdim\dimen6>.01\p@\CD@KI\global\CD@QA\dimen0
\else\global\CD@QA\dimen7 \fi
\endgroup\dimen2\CD@QA\CD@iK\CD@lK{\ifPositiveGradient\else-\fi\dimen6}\CD@iK
\CD@kK{\ifPositiveGradient-\fi\dimen6}\CD@iK\CD@eK{\dimen7}}\def\CD@KI{\CD@hJ
\ifdim\dimen7>1.73\dimen6 \divide\dimen2 4 \multiply\CD@TC2 \else\dimen2=0.%
353553\dimen2 \advance\CD@LH-\CD@TC\multiply\CD@TC4
\fi\dimen0=4\dimen2 \CD@ZG
4\CD@ZG{-2}\CD@ZG2\CD@ZG{-2.5}}\def\CD@AI{\begingroup\count@\dimen0 \dimen2 45%
pt \divide\count@\dimen2
\ifdim\dimen0<\z@\advance\count@\m@ne\fi\ifodd
\count@\advance\count@1\CD@@A\else\CD@y\fi\advance\dimen0-\count@\dimen2
\CD@gE\multiply\dimen0\m@ne\fi\ifnum\count@<0 \multiply\count@-7 \fi\dimen3%
\dimen1 \dimen6\dimen0 \dimen7 3754936sp \ifdim\dimen0<6\p@\def\CD@OG{4000}%
\fi\CD@KJ\dimen2\dimen3\CD@dG{\dimen2}\CD@hJ\multiply\CD@TC-6
\dimen0\dimen2
\CD@ZG1\CD@ZG{0.3}\dimen1\dimen0 \dimen2\dimen3 \dimen0\dimen3 \CD@ZG3\CD@ZG{%
1.5}\CD@ZG{0.3}\divide\count@2
\CD@gE\multiply\dimen1\m@ne\fi\ifodd\count@
\dimen2\dimen1\dimen1\dimen0\dimen0-\dimen2 \fi\divide\count@2
\ifodd\count@
\multiply\dimen0\m@ne\multiply\dimen1\m@ne\fi\global\CD@QA\dimen0\global
\CD@RA\dimen1\endgroup\dimen6\CD@QA\dimen7\CD@RA}\def\CD@OC{255}\let\CD@OG
\def\CD@KJ{\begingroup\ifdim\dimen7<\dimen6
\dimen9\dimen7\dimen7\dimen
6\dimen6\dimen9\CD@@A\else\CD@y\fi\dimen2\z@\dimen3\CD@XH\dimen4\CD@XH\dimen0%
\z@\dimen8=\CD@OG\CD@XH\CD@lC\global\CD@yA\dimen\CD@gE0\else3\fi\global\CD@zA
\dimen\CD@gE3\else0\fi\endgroup\CD@LH\CD@yA\CD@TC\CD@zA}\def\CD@lC{\count@
\dimen6 \divide\count@\dimen7 \advance\dimen6-\count@\dimen7
\dimen9\dimen4 \advance\dimen9\count@\dimen0 \ifdim\dimen9>\dimen8
\CD@@C\else\CD@AC\ifdim \dimen6>\z@\dimen9\dimen6 \dimen6\dimen7
\dimen7\dimen9 \expandafter
\expandafter\expandafter\CD@lC\fi\fi}\def\CD@@C{\ifdim\dimen0=\z@\ifdim\dimen
9<2\dimen8 \dimen0\dimen8 \fi\else\advance\dimen8-\dimen4 \divide\dimen8%
\dimen0 \ifdim\count@\CD@XH<2\dimen8 \count@\dimen8 \dimen9\dimen4
\advance \dimen9\count@\dimen0
\CD@AC\fi\fi}\def\CD@AC{\dimen4\dimen0 \dimen0\dimen9
\advance\dimen2\count@\dimen3 \dimen9\dimen2 \dimen2\dimen3 \dimen3\dimen9 }%
\def\CD@ZG#1{\CD@dG{\dimen2}\advance\dimen0 #1\dimen2 }\def\CD@dG#1{\divide#1%
\CD@TC\multiply#1\CD@LH}\def\CD@eG#1{\divide#1\CD@vA\multiply#1\CD@uA}\def
\def\CD@hJ{\dimen6\CD@LH\CD@XH
\multiply\dimen6\CD@LH\dimen7\CD@TC\CD@XH\multiply\dimen7\CD@TC\CD@KJ}\def
\let\CD@GH
\def\CD@GH{\errorcontextlines\m@ne}\fi\ifnum\inputlineno<0
\let
\let\CD@W\empty\let\CD@mD\relax\let\CD@uI\relax\let\CD@vI\relax
\let\CD@zF\relax\message{! Why not upgrade to TeX version 3? (available since
1990)}\else\def\CD@W{ at line \number\inputlineno}\def\CD@mD{ - first occurred%
}\def\CD@uI{\edef\CD@h{\the\inputlineno}\global\let\CD@jB\CD@h}\def\CD@h{9999%
}\def\CD@vI{\xdef\CD@jB{\the\inputlineno}}\def\CD@jB{\CD@h}\def\CD@zF{\ifnum
\CD@h<\inputlineno\edef\CD@CD{\space at lines
\CD@h--\the\inputlineno}\else
\edef\CD@CD{\CD@W}\fi}\fi\let\CD@CD\empty\def\CD@YA#1#2{\CD@GH\errhelp=#2%
\expandafter\errmessage{\CD@tA:
#1}}\def\CD@KB#1{\begingroup\expandafter \message{! \CD@tA:
#1\CD@CD}\ifnum\CD@XB>\CD@NB\ifnum\CD@CA>\CD@NB\else\ifnum
\CD@lA>\CD@FA\else\ifnum\CD@LB>\CD@FA\advance\CD@XB-\CD@NB\advance\CD@FA-%
\CD@lA\advance\CD@FA1\relax\expandafter\message{! (error detected at
row \the \CD@XB, column \the\CD@FA, but probably caused
elsewhere)}\fi\fi\fi\fi
\endgroup}\def\CD@gB#1{{\expandafter\message{\CD@tA\space Warning: #1\CD@W}}}%
\def\CD@CB#1#2{\CD@gB{#1 \string#2 is obsolete\CD@mD}}\def\CD@AB#1{\CD@CB{%
Dimension}{#1}\CD@DE#1\CD@BB\CD@BB}\def\CD@BB{\CD@OA=}\def\CD@@B#1{\CD@CB{%
Count}{#1}\CD@DE#1\CD@OH\CD@OH}\def\CD@OH{\count@=}\def\HorizontalMapLength{%
\CD@AB\HorizontalMapLength}\def\VerticalMapHeight{\CD@AB\VerticalMapHeight}%
\def\VerticalMapDepth{\CD@AB\VerticalMapDepth}\def\VerticalMapExtraHeight{%
\CD@AB\VerticalMapExtraHeight}\def\VerticalMapExtraDepth{\CD@AB
\VerticalMapExtraDepth}\def\DiagonalLineSegments{\CD@@B\DiagonalLineSegments}%
\CD@ZA\CD@KH{\CD@eF\space diagonal line and arrow
font not
available}\else\let\CD@KH\relax\fi\def\CD@aG#1#2<#3:#4:#5#6{\begingroup\CD@PA
#3\relax\advance\CD@PA-#2\relax\ifdim.1em<\CD@PA\CD@uA#5\relax\CD@vA#6\relax
\ifnum\CD@uA<\CD@vA\count@\CD@vA\advance\count@-\CD@uA\CD@KB{#4 by
\the\CD@PA
}\if#1v\let\CD@CH\CD@JK\edef\tmp{\the\CD@uA--\the\CD@vA,\the\CD@FA}\else
\advance\count@\count@\if#1l\advance\count@-\CD@A\else\if#1r\advance\count@
\CD@A\fi\fi\advance\CD@PA\CD@PA\let\CD@CH\CD@ZE\edef\tmp{\the\CD@NB,\the
\CD@uA--\the\CD@vA}\fi\divide\CD@PA\count@\ifdim\CD@CH<\CD@PA\global\CD@CH
\CD@PA\fi\fi\fi\endgroup}\CD@tG\CD@xE\CD@JD\CD@ID\CD@rG\CD@xI{See
the message above.}\CD@rG\CD@lH{Perhaps you've forgotten to end the
diagram before resuming the text, in\CD@uG which case some garbage
may be added to the diagram, but we should be ok now.\CD@uG
Alternatively you've left a blank line in the middle - TeX will now
complain\CD@uG that the remaining \CD@S s are misplaced - so please
use comments for layout.}\CD@rG\CD@hD{You have already
closed too many brace pairs or environments; an \CD@HD\CD@uG command was (%
over)due.}\CD@rG\CD@hH{\CD@dC\space and \CD@HD\space commands must match.}%
\def\CD@jH{\ifnum\inputlineno=0 \else\expandafter\CD@iH\fi}\def\CD@iH{\CD@MD
\CD@GD\crcr\CD@YA{missing \CD@HD\space inserted before \CD@kH- type "h"}%
\CD@lH\enddiagram\CD@AG\CD@kH\par}\def\CD@AG#1{\edef\enddiagram{\noexpand
\CD@rD{#1\CD@W}}}\def\CD@rD#1{\CD@YA{\CD@HD\space(anticipated by #1) ignored}%
\CD@xI\let\enddiagram\CD@SG}\def\CD@SG{\CD@YA{misplaced \CD@HD\space ignored}%
\CD@hH}\def\CD@mC{\CD@YA{missing \CD@HD\space
inserted.}\CD@hD\CD@AG{closing
group}}\ifx\DeclareOption\CD@qK\else\ifx\DeclareOption\@notprerr\else
\def\vboxtoz{\vbox to\z@}%% \z@ is in plain TeX and means 0pt
\def\scriptaxis#1{\@scriptaxis{$\scriptstyle#1$}}%%
\def\ssaxis#1{\@ssaxis{$\scriptscriptstyle#1$}}%%
\def\@scriptaxis#1{\dimen0\axisheight\advance\dimen0-\ss@axisheight\raise
\dimen0\hbox{#1}}\def\@ssaxis#1{\dimen0\axisheight\advance\dimen0-%
\ss@axisheight\raise\dimen0\hbox{#1}}
\let\boldscriptaxis\scriptaxis%%
\def\boldscript#1{\hbox{$\scriptstyle#1$}}%%
\def\boldscriptaxis#1{\@scriptaxis{\boldmath$\scriptstyle#1$}}%%
\def\boldscript#1{\hbox{\boldmath$\scriptstyle#1$}}%%
\def\raisehook#1#2#3{\hbox{\setbox3=\hbox{#1$\scriptscriptstyle#3$}%
%% the character to use
\dimen0\ss@axisheight%% \scriptscriptstyle axis height
\dimen1\axisheight\advance\dimen1-\dimen0%% difference in axis heights
\dimen2\ht3\advance\dimen2-\dimen0%
%%  height of char above axis (half spread)
\advance\dimen2-0.021em\advance\dimen1 #2\dimen2%
%% shift = axis_difference +/- half_spread
\raise\dimen1\box3}}%% print the character
\def\shifthook#1#2#3{\setbox1=\hbox{#1$\scriptscriptstyle#3$}\dimen0\wd1%
\divide\dimen0 12\CD@zH{\dimen0}%%  "u"
\dimen1\wd1\advance\dimen1-2\dimen0 \advance\dimen1-2\CD@oI\CD@zH{\dimen1}%
\kern#2\dimen1\box1}%% print
\def\@cmex{\mathchar"03}%%ascii double quote
\def\make@pbk#1{\setbox\tw@\hbox to\z@{#1}\ht\tw@\z@\dp\tw@\z@\box\tw@}\def
\def\CD@qH{\kern0.11em}\def\CD@pH{\kern0%
.35em}
\def\dblvert{\def\CD@rH{\kern.5\PileSpacing}}\def\CD@rH{}
\def\SEpbk{\make@pbk{\CD@qH\CD@rH\vrule depth 2.87ex height -2.75ex width 0.%
95em \vrule height -0.66ex depth 2.87ex width 0.05em \hss}}
\def\SWpbk{\make@pbk{\hss\vrule height -0.66ex depth 2.87ex width 0.05em
\vrule depth 2.87ex height -2.75ex width 0.95em \CD@qH\CD@rH}}
\def\NEpbk{\make@pbk{\CD@qH\CD@rH\vrule depth -3.81ex height 4.00ex width 0.%
95em \vrule height 4.00ex depth -1.72ex width 0.05em \hss}}
\def\NWpbk{\make@pbk{\hss\vrule height 4.00ex depth -1.72ex width 0.05em
\vrule depth -3.81ex height 4.00ex width 0.95em \CD@qH\CD@rH}}
\def\puncture{{\setbox0\hbox{A}\vrule height.53\ht0 depth-.47\ht0 width.35\ht
0 \kern.12\ht0 \vrule height\ht0 depth-.65\ht0 width.06\ht0
\kern-.06\ht0 \vrule height.35\ht0 depth0pt width.06\ht0
\kern.12\ht0 \vrule height.53\ht0 depth-.47\ht0 width.35\ht0 }}
\def\NEclck{\overprint{\raise2.5ex\rlap{ \CD@rH$\scriptstyle\searrow$}}}%%
\def\NEanti{\overprint{\raise2.5ex\rlap{ \CD@rH$\scriptstyle\nwarrow$}}}%%
\def\NWclck{\overprint{\raise2.5ex\llap{$\scriptstyle\nearrow$ \CD@rH}}}%%
\def\NWanti{\overprint{\raise2.5ex\llap{$\scriptstyle\swarrow$ \CD@rH}}}%%
\def\SEclck{\overprint{\lower1ex\rlap{ \CD@rH$\scriptstyle\swarrow$}}}%%
\def\SEanti{\overprint{\lower1ex\rlap{ \CD@rH$\scriptstyle\nearrow$}}}%%
\def\SWclck{\overprint{\lower1ex\llap{$\scriptstyle\nwarrow$ \CD@rH}}}%%
\def\SWanti{\overprint{\lower1ex\llap{$\scriptstyle\searrow$ \CD@rH}}}
\def\rhvee{\mkern-10mu\greaterthan}%%
\def\lhvee{\lessthan\mkern-10mu}%%
\def\dhvee{\vboxtoz{\vss\hbox{$\vee$}\kern0pt}}%%
\def\uhvee{\vboxtoz{\hbox{$\wedge$}\vss}}%%
\def\dhlvee{\vboxtoz{\vss\hbox{$\scriptstyle\vee$}\kern0pt}}%%
\def\uhlvee{\vboxtoz{\hbox{$\scriptstyle\wedge$}\vss}}%%
\def\dhblvee{\vboxtoz{\vss\boldscript\vee\kern0pt}}%%
\def\uhblvee{\vboxtoz{\boldscript\wedge\vss}}%%
\def\rhcvee{\mkern-10mu\succ}%%
\def\lhcvee{\prec\mkern-10mu}%%
\def\dhcvee{\vboxtoz{\vss\hbox{$\curlyvee$}\kern0pt}}%%
\def\uhcvee{\vboxtoz{\hbox{$\curlywedge$}\vss}}%%
\def\rhvvee{\mkern-13mu\gg}%% 24.8.92 changed 10mu to 13mu
\def\lhvvee{\ll\mkern-13mu}%% to make rule go through
\def\dhvvee{\vboxtoz{\vss\hbox{$\vee$}\kern-.6ex\hbox{$\vee$}\kern0pt}}%%
\def\uhvvee{\vboxtoz{\hbox{$\wedge$}\kern-.6ex \hbox{$\wedge$}\vss}}%%
\def\rhtriangle{\triangleright\mkern1.2mu}%% 29.1.93
\def\lhtriangle{\triangleleft\mkern.8mu}%%
\def\uhtriangle{\vbox{\kern-.2ex \hbox{$\scriptscriptstyle\bigtriangleup$}%
\kern-.25ex}}%%
\def\dhtriangle{\vbox{\kern-.28ex \hbox{$\scriptscriptstyle\bigtriangledown$}%
\kern-.1ex}}%% 15.1.93 from -.25ex
\def\dhblack{\vbox{\kern-.25ex\nointerlineskip\hbox{$\blacktriangledown$}}}%
\def\uhblack{\vbox{\kern-.25ex\nointerlineskip\hbox{$\blacktriangle$}}}%
\def\dhlblack{\vbox{\kern-.25ex\nointerlineskip\hbox{$\scriptstyle
\blacktriangledown$}}}%% AMS
\def\uhlblack{\vbox{\kern-.25ex\nointerlineskip\hbox{$\scriptstyle
\blacktriangle$}}}%% AMS
\uhblack\newarrowhead{littleblack}{\mkern-1mu%
\scriptaxis\blacktriangleright}{\scriptaxis\blacktriangleleft\mkern-2mu}%
\def\rhla{\hbox{\setbox0=\lnchar55\dimen0=\wd0\kern-.6\dimen0\ht0\z@\raise
\axisheight\box0\kern.1\dimen0}}%%
\def\lhla{\hbox{\setbox0=\lnchar33\dimen0=\wd0\kern.05\dimen0\ht0\z@\raise
\axisheight\box0\kern-.5\dimen0}}%%
\def\dhla{\vboxtoz{\vss\rlap{\lnchar77}}}%%
\def\uhla{\vboxtoz{\setbox0=\lnchar66 \wd0\z@\kern-.15\ht0\box0\vss}}%% 1/93
\def\lhlala{\lhla\kern.3em\lhla}%%
\def\rhlala{\rhla\kern.3em\rhla}%%
\def\uhlala{\hbox{\uhla\raise-.6ex\uhla}}%%
\def\dhlala{\hbox{\dhla\lower-.6ex\dhla}}%%
\def\hhO{\scriptaxis\bigcirc\mkern.4mu} \def\hho{{\circ}\mkern1.2mu}%
\hhO\hhO{\scriptstyle\bigcirc}{\scriptstyle\bigcirc}%%
\def\rhtimes{\mkern-5mu{\times}\mkern-.8mu}\def\lhtimes{\mkern-.8mu{\times}%
\mkern-5mu}\def\uhtimes{\setbox0=\hbox{$\times$}\ht0\axisheight\dp0-\ht0%
\lower\ht0\box0 }\def\dhtimes{\setbox0=\hbox{$\times$}\ht0\axisheight\box0 }%
\Rightarrow\Leftarrow{\@cmex7F}{\@cmex7E}
\def\twoheaddownarrow{\rlap{$\downarrow$}\raise-.5ex\hbox{$\downarrow$}}%%
\def\twoheaduparrow{\rlap{$\uparrow$}\raise.5ex\hbox{$\uparrow$}}%%
\def\ltvee{\mkern-1mu{\lessthan}\mkern.4mu}%% \mkern added 15.1.93
\else\newarrowtail{%
boldlittlevee}{\boldscriptaxis\greaterthan}{\mkern-1mu\boldscriptaxis
\lessthan}{\boldscript\vee}{\boldscript\wedge}\fi
\def\rttriangle{\mkern1.2mu\triangleright}%% 29.1.93
\uhblack\newarrowtail{littleblack}{\scriptaxis
\blacktriangleright\mkern-2mu}{\mkern-1mu\scriptaxis\blacktriangleleft}%
\def\rtla{\hbox{\setbox0=\lnchar55\dimen0=\wd0\kern-.5\dimen0\ht0\z@\raise
\axisheight\box0\kern-.2\dimen0}}%%
\def\ltla{\hbox{\setbox0=\lnchar33\dimen0=\wd0\kern-.15\dimen0\ht0\z@\raise
\axisheight\box0\kern-.5\dimen0}}%%
\def\dtla{\vbox{\setbox0=\rlap{\lnchar77}\dimen0=\ht0\kern-.7\dimen0\box0%
\kern-.1\dimen0}}%% 15.1.93 from -.6
\def\utla{\vbox{\setbox0=\rlap{\lnchar66}\dimen0=\ht0\kern-.1\dimen0\box0%
\kern-.6\dimen0}}%%
\def\rtvvee{\gg\mkern-3mu}%%
\def\ltvvee{\mkern-3mu\ll}%%
\def\dtvvee{\vbox{\hbox{$\vee$}\kern-.6ex \hbox{$\vee$}\vss}}%%
\def\utvvee{\vbox{\vss\hbox{$\wedge$}\kern-.6ex \hbox{$\wedge$}\kern\z@}}%%
\def\ltlala{\ltla\kern.3em\ltla}%%
\def\rtlala{\rtla\kern.3em\rtla}%%
\def\utlala{\hbox{\utla\raise-.6ex\utla}}%%
\def\dtlala{\hbox{\dtla\lower-.6ex\dtla}}%%
\def\utbar{\vrule height 0.093ex depth0pt width 0.4em}%%
\let\dtbar\utbar%%
\def\rtbar{\mkern1.5mu\vrule height 1.1ex depth.06ex width .04em\mkern1.5mu}%
\let\ltbar\rtbar%%
\def\rthooka{\raisehook{}+\subset\mkern-1mu}%%
\def\lthooka{\mkern-1mu\raisehook{}+\supset}%%
\def\rthookb{\raisehook{}-\subset\mkern-2mu}%%
\def\lthookb{\mkern-1mu\raisehook{}-\supset}%%
\def\dthooka{\shifthook{}+\cap}%%
\def\dthookb{\shifthook{}-\cap}%%
\def\uthooka{\shifthook{}+\cup}%%
\def\uthookb{\shifthook{}-\cup}%%
\uthooka\newarrowtail{hookb}%
\CD@qK\newarrowtail{boldhooka}\rthooka\lthooka\dthooka\uthooka
\uthookb\newarrowtail{%
boldhook}\rthooka\lthooka\dthookb\uthooka\else\def\rtbhooka{\raisehook
\boldmath+\subset\mkern-1mu}%%
\def\ltbhooka{\mkern-1mu\raisehook\boldmath+\supset}%%
\def\rtbhookb{\raisehook\boldmath-\subset\mkern-2mu}%%
\def\ltbhookb{\mkern-1mu\raisehook\boldmath-\supset}%%
\def\dtbhooka{\shifthook\boldmath+\cap}%%
\def\dtbhookb{\shifthook\boldmath-\cap}%%
\def\utbhooka{\shifthook\boldmath+\cup}%%
\def\utbhookb{\shifthook\boldmath-\cup}%%
\utbhooka\newarrowtail{%
boldhookb}\rtbhookb\ltbhookb\dtbhookb\utbhookb\newarrowtail{boldhook}%
\def\dtsqhooka{\shifthook{}+\sqcap}%%
\def\ltsqhooka{\mkern-1mu\raisehook{}+\sqsupset}%%
\def\rtsqhooka{\raisehook{}+\sqsubset\mkern-1mu}%%
\def\utsqhooka{\shifthook{}+\sqcup}%%
\uthooka\newarrowtail{C}\rthooka
\hhO\hhO{\scriptstyle\bigcirc}{\scriptstyle\bigcirc}%%
\Leftarrow\Rightarrow{\@cmex7E}{\@cmex7F}
\def\vfthree{\mid\!\!\!\mid\!\!\!\mid}%%ascii
\def\vfdashstrut{\vrule width0pt height1.3ex depth0.7ex}%%
\def\vfthedash{\vrule width\CD@LF height0.6ex depth 0pt}%%
\def\hfthedash{\CD@AJ\vrule\horizhtdp width 0.26em}%%
\def\hfdash{\mkern5.5mu\hfthedash\mkern5.5mu}%%
\def\vfdash{\vfdashstrut\vfthedash}%%
\def\rightBrace{\d@brace[thick,cmex]}%%ASCII square brackets []
\def\leftBrace{\u@brace[thick,cmex]}%%ASCII square brackets []
\def\upperBrace{\r@brace[thick,cmex]}%%ASCII square brackets []
\def\lowerBrace{\l@brace[thick,cmex]}%%ASCII square brackets []
\def\rightParenth{\d@parenth[thick,cmex]}%%ASCII square brackets []
\def\leftParenth{\u@parenth[thick,cmex]}%%ASCII square brackets []
\def\upperParenth{\r@parenth[thick,cmex]}%%ASCII square brackets []
\def\lowerParenth{\l@parenth[thick,cmex]}%%ASCII square brackets []
\let\hEq\rEq%%
\let\vEq\uEq%%
\def\labelstyle{%%
\ifincommdiag%%
\textstyle%%
\else%%
\scriptstyle%%
\fi}%%
\let\objectstyle\displaystyle
\CD@hK\message{| running in pdf mode -- diagonal arrows will
work automatically |}\else\message{| >>>>>>>> POSTSCRIPT MODE
(DVIPS) IS NOW THE DEFAULT <<<<<<<<<<<<|}\message{|(DVI mode has not
\else\message{| >>>>>>>>
USING UGLY OBSOLETE DVI CODE - PLEASE STOP
<<<<<<<<<<<<|}\message{|(DVI mode has not been supported since 1992
\date{ }
\begin{document}

\title{On the Number of Rumer Diagrams}

\author{Valentin Vankov Iliev
\footnote{Section of Algebra and Logic, Institute of Mathematics and
Informatics, Bulgarian Academy of Sciences, 1113 Sofia, Bulgaria.}}

\maketitle

\hangindent 1.4 in \hangafter 0  {\it "In the literature there are a
number of attempts to deal with the matter of skeletal isomerism.
The two papers of widest scope are those of Sylvester, \cite{[55]},
and of Gordan and Alexejeff, \cite{[15]}, in both of which the
parallelism between the chemical theory of molecular structure and
the algebraic theory of invariants is stressed. It seems probable
that interesting and important results might be attained by
exhaustively pursuing this line of attack, but chemists have never
devoted to these papers the attention which they deserve".}

\hangindent 1.4 in \hangafter 0 (A.~C.~Lunn, J.~K.~Senior, 1929,
 \cite{[35]}, p. 1039.)

\begin{abstract}

In this paper we review the famous article "Eine f\"ur die
Valenztheorie geeignete Basis der bin\"aren Vektorinvarianten" of
G.~Rumer, E.~Teller, and H.~Weyl on Rumer diagrams and use a
powerful old result from algebraic geometry to enumerate these
diagrams with given numbers of atoms and valence bonds.

\end{abstract}

\section{Introduction and Notation}

\label{1}

\subsection{Introduction}

\label{1.1}

The paper is devoted to presenting the number of Rumer diagrams with
$n$ atoms which form a molecule with $m$ valence bonds. We follow
the classical article~\cite{[50]} where the atoms are represented by
vectors
\begin{equation}
x^{\left(1\right)}=\left(
\begin{array}{cccccccccccc}
 x_1^{\left(1\right)}\\
 x_2^{\left(1\right)}\\
 \end{array}
\right), \ldots,x^{\left(n\right)}=\left(
\begin{array}{cccccccccccc}
 x_1^{\left(n\right)}\\
 x_2^{\left(n\right)}\\
 \end{array}
\right),\label{1.1.1}
\end{equation}
in the complex plane $\mathbb{C}^2$. When $n\geq 2$ each valence
bond between two of them, $x^{\left(i\right)}$ and
$x^{\left(j\right)}$, is modeled by the determinant
\begin{equation}
[x^{\left(i\right)},x^{\left(j\right)}]=\left|
\begin{array}{cccccccccccc}
 x_1^{\left(i\right)}& x_1^{\left(j\right)}\\
 x_2^{\left(i\right)}& x_2^{\left(j\right)}\\
 \end{array}
\right|,\label{1.1.5}
\end{equation}
viewed as a polynomial in the four variables
$x_1^{\left(i\right)},x_2^{\left(i\right)},
x_1^{\left(j\right)},x_2^{\left(j\right)}$. These determinants,
called \emph{brackets}, considered as complex-valued functions in
two vector arguments $x^{\left(i\right)},x^{\left(j\right)}$,
defined on the plane $\mathbb{C}^2$, are the building blocks of the
set of all such functions
$f(x^{\left(1\right)},\ldots,x^{\left(n\right)})$ which have the
same value (are \emph{invariant}) when changing the coordinate
system in $\mathbb{C}^2$ via transformations with determinant $1$.
All such transformations of $\mathbb{C}^2$ form a group called
\emph{special linear group} and denoted $\sl(\mathbb{C}^2)$, or,
simply $\sl(2)$. More precisely, in accord with the first main
theorem on invariants, homogeneous degree $k$ invariant under the
group $\sl(2)$ polynomial $f$ in
$x_1^{\left(1\right)},x_2^{\left(1\right)},\ldots,
x_1^{\left(n\right)},x_2^{\left(n\right)}$ exists exactly when $k$
is even, $k=2m$, and in this case $f$ is a linear combinations of
products (called \emph{bracket monomials}) of the form
\begin{equation}
[x^{\left(i_1\right)},x^{\left(j_1\right)}]\ldots
[x^{\left(i_m\right)},x^{\left(j_m\right)}],\hbox{\ }1\leq
i_s<j_s\leq n,\hbox{\ }s\in [1,m].\label{1.1.10}
\end{equation}
The expression of any bracket monomial~\eqref{1.1.10} produces a
graph $G$ on $n$ vertices $1,\ldots,n$ and $m$ edges
$(i_1,j_1),\ldots,(i_m,j_m)$ without loops, where the vertices are
placed clockwise on a circle in this order and any edge $(i_s,j_s)$
is represented by the segment with end points $i_s$ and $j_s$.
Conversely, each bracket monomial's expression~\eqref{1.1.10} can be
restituted from its graph $G$ which thus is called its \emph{valence
scheme}. In case there are no intersecting edges, the valence scheme
$G$ is said to be \emph{Rumer diagram}. In quantum chemistry, each
Rumer diagram, or, what is the same, bracket monomial, represents a
pure valence state of a molecule with atoms $1,\ldots,n$ and any
invariant (linear combination of bracket monomials) represents a
valence state (mixture of pure valence states).

This idea goes back to the providential paper~\cite{[55]} of
J.~J.~Sylvester, where he, in particular, wrote: "An invariant of a
form or system of algebraic forms must thus represent a saturated
system of atoms in which the rays of all the atoms are connected
into bonds." A meticulous historic analysis of Sylvester's
contributions, written in a modern language, can be found
in~\cite{[25]}. The analogy between invariant theory and chemistry
was completely ignored by the chemists till the end of 19-th century
and was galvanized by the physicist G.~Rumer in his pioneering
paper~\cite{[45]} and in its popular continuation~\cite{[50]}.

In order for the exposition to be self-contained, in section 2 we
gather the necessary definitions and results from different
mathematical disciplines in three subsections: Algebra, Invariant
theory, and Algebraic geometry. From the content of the first
subsection we note the introduction of a special grading (via
graphical multidegree) of a polynomial ring used later. In the last
two subsections we discuss the isomorphism between the graded ring
consisting of $\sl(2)$-invariant polynomials in
$x_1^{\left(1\right)},x_2^{\left(1\right)},\ldots,
x_1^{\left(n\right)},x_2^{\left(n\right)}$ and the graded
homogeneous coordinate ring of the Grassmannian $G(n,2)$. As a
consequence, we use the famous postulation formula of W.~V.~D.~Hodge
and J.~E.~Littlewood in order to find the dimensions of the
homogeneous components of the graded ring of invariants in
Theorem~\ref{5.10.10}.

Section 3 is devoted to meticulous proofs of the three theorems
from~\cite{[50]} and to the enumeration of Rumer diagrams with given
numbers of atoms and valence bonds presented in
Theorem~\ref{10.5.1}.

\subsection{Notation}

\label{1.5}

Below we introduce some notation.

$\mathbb{N}$: The set of positive integers.

$\mathbb{N}_+$: The set of nonnegative integers.

$\mathbb{C}$: The field of complex numbers.

$[1,k]=\{1,\ldots,k\}\subset\mathbb{N}$.

$\binom{k}{\ell}$ for $k\in \mathbb{N}$, $\ell\in \mathbb{N}_+$: The
binomial coefficient.

$n\in\mathbb{N}$, $n\geq 2$: The number of atoms.

$m\in\mathbb{N}_+$: The number of valence bonds.

\section{Mathematical Background}

\label{5}

\subsection{Algebra}

\label{5.1}

Some elementary notions and statements from algebra are supposed to
be known. We suggest~\cite{[1]} and~\cite{[5]} as universal
references.

By a \emph{monoid} we mean a set $\Delta$ together with an
associative binary operation on $\Delta$, written additively, which
has zero element $0$. Examples are the set $\mathbb{N}_+$ with
addition of integers and any Cartesian power $\mathbb{N}_+^\ell$
with componentwise addition of $\ell$-tuples.

We denote by $A=\mathbb{C}[z_1,\ldots,z_r]$ the ring of polynomials
in the variables $z_1,\ldots,z_r$ over the field $\mathbb{C}$ of
complex numbers. Given a monoid $\Delta$, we say that $A$ is a
\emph{graded of type $\Delta$ ring} if there exists a family
$(A_\lambda)_{\lambda\in\Delta}$ of subgroups of the additive group
$A$ such that $A=\oplus_{\lambda\in\Delta} A_\lambda$ (any
polynomial $f\in A$ can be presented uniquely as
$f=\sum_{\lambda\in\Delta}f_\lambda$ , $f_\lambda\in A_\lambda$, and
only a finite number of $f_\lambda$ are nonzero) and $A_\lambda
A_\mu\subset A_{\lambda+\mu}$ for $\lambda,\mu\in\Delta$. A subring
$I\subset A$ is called \emph{graded subring} if
$I=\oplus_{\lambda\in\Delta}I_\lambda$ where $I_\lambda=I\cap
A_\lambda$ for all $\lambda\in\Delta$. A nonempty subset $I\subset
A$ is called \emph{ideal} of $A$ if difference of two polynomials in
$I$ is again in $I$ and if the product of any polynomial with a
member of $I$ is again a member of $I$. The ideal $I$ in $A$ is said
to be \emph{homogeneous} if $I$ is a graded subring. We note that an
ideal is homogeneous if and only if it can be generated by
homogeneous elements.

Any ideal $I$ in the ring $A$ produces a ring $A/I$, called
\emph{factor-ring}, in the following way: The members of the set
$A/I$ are all subsets of $A$ of the form $f+I$, $f\in A$, and we
have $f+I=g+I$ if and only if $f-g\in I$. Addition and
multiplication in $A/I$ are defined naturally: $(f+I)+(g+I)=f+g+I$,
$(f+I)(g+I)=fg+I$, $0+I=I$ is the zero element, and $1+I$ is the
unit.

In general, a map from a ring (graded type $\Delta$ ring) to another
ring (graded type $\Delta$ ring) is called \emph{homomorphism}
(\emph{homogeneous homomorphism}) if it maps sums and products onto
the sums and products of the corresponding images, respectively (and
maps any homogeneous component into the corresponding homogeneous
component). \emph{Kernel} of this homomorphism is the ideal
consisting of members of the domain which are mapped onto zero. When
the homomorphism is homogeneous, then its kernel turns out to be
homogeneous ideal. The map
\[
c\colon A\to A/I,\hbox{\ }f\mapsto f+I,
\]
is said to be the \emph{canonical homomorphism} and its kernel is
$I$. If the ring $A$ is a graded of type $\Delta$,
$A=\oplus_{\lambda\in\Delta} A_\lambda$, and if the ideal $I$ is
homogeneous, then $A/I$ inherits this grading via $c$:
$A/I=\oplus_{\lambda\in\Delta}c(A_\lambda)$. In this case the
canonical homomorphism is homogeneous. In general, any surjective
homomorphism $\varphi\colon A\to B$ of rings is, up to a unique
isomorphism produced by $\varphi$, canonical: In the commutative
diagram below $\bar{\varphi}$ is an isomorphism.
\begin{equation}
\begin{diagram}
A           &         & \\
\dTo_{c} & \rdTo^{\varphi}& \\
A/I        & \rTo^{\bar{\varphi}} & B.\\\label{5.1.15}
\end{diagram}
\end{equation}
Here the kernel of $\varphi$ is $I$ and commutativity means
$\varphi=\bar{\varphi}\circ c$.

The \emph{natural grading} of $A$ can be defined as the grading of
type $\mathbb{N}_+$, where $A_\lambda$ is the subgroup of $A$,
consisting of all homogeneous degree $\lambda$ polynomials,
$\lambda\in\mathbb{N}_+$, plus the zero polynomial (which has degree
$-\infty$). Since $A_0=\mathbb{C}$, we obtain that any
\emph{homogeneous component} $A_\lambda$ is a $\mathbb{C}$-linear
space.

Let $y_{ij}$, $1\leq i<j\leq n$, be $\binom{n}{2}$ in number
variables, and let $\mathbb{C}[y_{ij}]=\mathbb{C}[y_{ij},\hbox{\
}1\leq i<j\leq n]$ be the corresponding  polynomial ring. Let us set
$B=\mathbb{C}[y_{ij}]$ for short and let
$B=\oplus_{m\in\mathbb{N}_+}B_m$ be the natural grading. For any
monomial
\begin{equation}
g=y_{i_1j_1}\ldots y_{i_mj_m}\in B_m,\hbox{\ } 1\leq i_s<j_s\leq
n,\hbox{\ } s\in [1,m],\label{5.1.1}
\end{equation}
a graph $G$ on $n$ vertices $1,\ldots,n$ and $m$ edges
$(i_1,j_1),\ldots,(i_m,j_m)$ without loops can be constructed
exactly in the same way lake the graph of a bracket
monomial~\eqref{1.1.10}. Conversely, each such monomial $g$ can be
restituted from its graph $G$ and the correspondence is one-to-one.
We define $\deg_v(g)$ to be the degree of the vertex $v$ of the
graph $G$, $v\in [1,n]$.

Given $m,m_1,\ldots,m_n\in\mathbb{N}_+$ with $m_1+\cdots+m_n=2m$, we
denote by $B_{m_1,\ldots,m_n}$ the $\mathbb{C}$-linear subspace of
$B_m$, spanned by all monomials~\eqref{5.1.1} such that
$\deg_v(g)=m_v$, $v\in [1,n]$. The $n$-tuple $m_1,\ldots,m_n$ is
called \emph{graphical multidegree} of the monomial~\eqref{5.1.1}.

Let us denote by $\Delta_2^n$ the monoid  consisting of all
$n$-tuples $(m_1,\ldots,m_n)\in \mathbb{N}_+^n$ with even sum. Thus,
we obtain a finer grading of type $\Delta_2^n$ of the polynomial
ring $B=\mathbb{C}[y_{ij}]$:
\begin{equation}
B=\oplus_{m\in\mathbb{N}_+}\oplus_{m_1+\cdots+m_n=2m}
B_{m_1,\ldots,m_n},\label{5.1.5}
\end{equation}
and, moreover, for any $m\in\mathbb{N}_+$ we have
\begin{equation}
B_m=\oplus_{m_1+\cdots+m_n=2m} B_{m_1,\ldots,m_n}.\label{5.1.10}
\end{equation}

\subsection{Invariant Theory}

\label{5.5}

The vectors in $\mathbb{C}^2$ have the form~\eqref{1.1.1} (that is,
are $2\times 1$-matrices) and the invertible linear transformations
of $\mathbb{C}^2$ (changes of coordinates) are $2\times 2$-matrices
$\sigma$ with $\det\sigma\neq 0$. The latter form a group $\gl(2)$
(\emph{general linear group}) and the special linear group $\sl(2)$
is the subgroup of $\gl(2)$, consisting of all $\sigma\in\gl(2)$
with $\det\sigma=1$.

For general vectors~\eqref{1.1.1} we set
\[
\mathbb{C}[x^{\left(1\right)},\ldots, x^{\left(n\right)}]=
\mathbb{C}[x_1^{\left(1\right)},x_2^{\left(1\right)},\ldots,
x_1^{\left(n\right)},x_2^{\left(n\right)}]
\]
to be the polynomial ring in $2n$ variables furnished with the
natural grading:
\begin{equation}
\mathbb{C}[x^{\left(1\right)},\ldots,
x^{\left(n\right)}]=\oplus_{k\in\mathbb{N}_+}\mathbb{C}[x^{\left(1\right)},\ldots,
x^{\left(n\right)}]_k. \label{1.5.1}
\end{equation}

The group $\gl(2)$ (and hence $\sl(2)$) acts on the plane
$\mathbb{C}^2$ by matrix multiplication, called \emph{left
multiplication}: If
\[
\sigma=\left(
\begin{array}{cccccccccccc}
 \sigma_{11} & \sigma_{12} \\
 \sigma_{21} & \sigma_{22} \\
 \end{array}
\right)\hbox{\rm\ and\ }x=\left(
\begin{array}{cccccccccccc}
 x_1\\
 x_2\\
 \end{array}
\right),
\]
then $\sigma\cdot x=\sigma x$. This action can be extended naturally
on the Cartesian product
$\mathbb{C}^2\times\cdots\times\mathbb{C}^2$ ($n$ times) via the
rule $\sigma\cdot (x^{\left(1\right)},\ldots,
x^{\left(n\right)})=(\sigma\cdot x^{\left(1\right)},\ldots,
\sigma\cdot x^{\left(n\right)})$. Thus, we obtain an action of the
group $\gl(2)$ on the polynomial ring
$\mathbb{C}[x^{\left(1\right)},\ldots, x^{\left(n\right)}]$ via the
formula
\begin{equation}
(\sigma\cdot f) (x^{\left(1\right)},\ldots,
x^{\left(n\right)})=f(\sigma^{-1}\cdot x^{\left(1\right)},\ldots,
\sigma^{-1}\cdot x^{\left(n\right)}).\label{5.5.1}
\end{equation}

All of the above actions are \emph{linear} ones: The left
multiplication by a fixed $\sigma\in\gl(2)$ induces an invertible
linear transformation of the corresponding $\mathbb{C}$-linear
space. In general, any linear action of a group $\Gamma$ on a
finite-dimensional linear space $W$ is called
\emph{finite-dimensional representation} of this group. In case $W$
can not be represented as a direct sum of two nonzero subspaces on
which this action of $\Gamma$ induces actions on these subspaces,
the representation $W$ of $\Gamma$ is said to be \emph{irreducible}.
When for any $\sigma\in\Gamma$, the corresponding left
multiplication by $\sigma$ in $W$ is the identity linear
transformation, the representation is called \emph{identical}.

Any finite-dimensional representation of $\gl(2)$ or $\sl(2)$ can be
represented as a finite direct sum of irreducible representations of
this group.

In order to explain the relation of the ring
$\mathbb{C}[x^{\left(1\right)},\ldots, x^{\left(n\right)}]$ with the
main statements in the classical invariant theory, we need to
furnish it with a finer structure --- this of a graded
ring~\eqref{1.5.1}. The action of the group $\gl(2)$ preserves the
grading: If $f\in \mathbb{C}[x^{\left(1\right)},\ldots,
x^{\left(n\right)}]_k$, then $\sigma\cdot f\in
\mathbb{C}[x^{\left(1\right)},\ldots, x^{\left(n\right)}]_k$.
Moreover, let $f\neq 0$, $f=\sum_{n\geq 0}f_k$. The polynomial $f$
is invariant if and only if each homogeneous polynomial $f_k$ is
invariant (see~\cite[Ch. 1, Sect. 2, Proposition 2]{[10]}).

The determinants (brackets)
$p_{ij}=[x^{\left(i\right)},x^{\left(j\right)}]$ from~\eqref{1.1.5}
for $1\leq i<j\leq n$ are members of the polynomial ring
$\mathbb{C}[x^{\left(1\right)},\ldots, x^{\left(n\right)}]$. Let us
define $p_{ij}=-p_{ji}$ for $i>j$. The homogeneous degree $2$
polynomials $p_{ij}$, $1\leq i<j\leq n$, satisfy the following
quadratic identities:
\begin{equation}
p_{i_1j_1}p_{j_2j_3}-p_{i_1j_2}p_{j_1j_3}+p_{i_1j_3}p_{j_1j_2}=0
\label{5.5.10}
\end{equation}
for all subsets $\{i_1,j_1,j_2,j_3\}\subset [1,n]$, consisting of
four elements. Let $\inv^{\left(n\right)}$ be the subring of
$\mathbb{C}[x^{\left(1\right)},\ldots, x^{\left(n\right)}]$,
generated by $p_{ij}$, $1\leq i<j\leq n$. The members of the ring
$\inv^{\left(n\right)}$ are constructed in the following way: First,
we define the map
\begin{equation}
\varphi\colon\mathbb{C}[y_{ij}]
\to\mathbb{C}[x^{\left(1\right)},\ldots, x^{\left(n\right)}],\hbox{\
}f(y_{ij})\mapsto f(p_{ij}).\label{5.5.15}
\end{equation}
This map is a homomorphism of rings and the ring
$\inv^{\left(n\right)}$ is the image of $\varphi$ (that is,
$\inv^{\left(n\right)}=\{f(p_{ij})\mid f\in \mathbb{C}[y_{ij}]\}$).
Moreover, $\inv^{\left(n\right)}$ inherits the natural structure of
graded ring on the polynomial ring $\mathbb{C}[y_{ij}]$:
$\inv^{\left(n\right)}=\oplus_{m\in\mathbb{N}_+}\inv_m^{\left(n\right)}$,
where $\inv_m^{\left(n\right)}=\varphi(\mathbb{C}[y_{ij}]_m)$. Thus,
for any $m\in\mathbb{N}_+$ the $\mathbb{C}$-linear space
$\inv_m^{\left(n\right)}$ is spanned by all bracket monomials
\begin{equation}
p_{i_1j_1}\ldots p_{i_mj_m}, 1\leq i_s<j_s\leq n,\hbox{\ } s\in
[1,m].\label{5.5.20}
\end{equation}

The first main theorem of invariants (see, for example~\cite[Ch. 1,
Sect. 2, Proposition 3 and Ch. 2, Sect. 5, Theorem]{[10]}, or,
\cite[1. Fundamentalsatz]{[50]}) yields that the $\mathbb{C}$-linear
space $\inv_m^{\left(n\right)}$ consists of all homogeneous
$\sl(2)$-invariant polynomials of degree $k=2m$ in
$\mathbb{C}[x^{\left(1\right)},\ldots, x^{\left(n\right)}]$ and
there are no homogeneous invariants in
$\mathbb{C}[x^{\left(1\right)},\ldots, x^{\left(n\right)}]_{2m+1}$,
$m\in\mathbb{N}_+$. Thus, the subring
$\inv^{\left(n\right)}\subset\mathbb{C}[x^{\left(1\right)},\ldots,
x^{\left(n\right)}]$ consists of all $\sl(2)$-invariant polynomials.

In case $n=1$, let us denote by $\inv^{\left(1\right)}$ the subring
of $\mathbb{C}[x^{\left(1\right)}]$, consisting of all
$\sl(2)$-invariant polynomials in two variables
$x_1^{\left(1\right)},x_2^{\left(1\right)}$. We have
$\inv^{\left(1\right)}=\mathbb{C}$, and, more precisely,
$\inv_0^{\left(1\right)}=\mathbb{C}$, $\inv_m^{\left(1\right)}=0$
for $m\in\mathbb{N}$.

\subsection{Algebraic Geometry: Grassmanians}

\label{5.10}

Let us consider the homogeneous ideal $Q$ in the polynomial ring
$\mathbb{C}[y_{ij}]$, generated by the homogeneous degree $2$
polynomials
\begin{equation}
Q_{i_1;j_1,j_2,j_3}=y_{i_1j_1}y_{j_2j_3}-y_{i_1j_2}y_{j_1j_3}+
y_{i_1j_3}y_{j_1j_2}\label{5.10.1}
\end{equation}
for all subsets $\{i_1,j_1,j_2,j_3\}\subset [1,n]$, consisting of
four elements. Because of~\eqref{5.5.10} and of the second main
theorem of invariants (\cite[2. Fundamentalsatz]{[50]}), the kernel
of the (graded) homomorphism~\eqref{5.5.15} is the homogeneous ideal
$Q$ of $\mathbb{C}[y_{ij}]$. Thus, in accord with~\eqref{5.1.15},
the graded ring
$\inv^{\left(n\right)}=\oplus_{m\in\mathbb{N}_+}\inv_m^{\left(n\right)}$
and the graded factor-ring $C=\mathbb{C}[y_{ij}]/Q$,
$C=\oplus_{m\in\mathbb{N}_+}C_m$, are isomorphic. In particular, for
any $m\in\mathbb{N}_+$ the $\mathbb{C}$-linear spaces
$\inv_m^{\left(n\right)}$ and $C_m$ are isomorphic.

Let us consider the finer grading of the polynomial ring
$\mathbb{C}[y_{ij}]$ from~\eqref{5.1.5} via graphical multidegrees:
\[
\mathbb{C}[y_{ij}]\oplus_{m\in\mathbb{N}_+}\oplus_{m_1+\cdots+m_n=2m}
\mathbb{C}[y_{ij}]_{m_1,\ldots,m_n}.
\]
Since any polynomial $Q_{i_1;j_1,j_2,j_3}\in \mathbb{C}[y_{ij}]_2$
has graphical multidegree $m_1,\ldots,m_n$ where $m_s=1$ for $s\in
\{i_1,j_1,j_2,j_3\}$ and $m_s=0$ otherwise, the ideal $Q$ is also
homogeneous relative to this finer grading. Therefore the
factor-ring $C=\mathbb{C}[y_{ij}]/Q$ inherits also this grading. In
other words, according to~\eqref{5.1.10}, we can write
\[
C=\oplus_{m\in\mathbb{N}_+}\oplus_{m_1+\cdots+m_n=2m}C_{m_1,\ldots,m_n},\hbox{\
} C_m=\oplus_{m_1+\cdots+m_n=2m}C_{m_1,\ldots,m_n},
\]
where $C_m$ is the image of $\mathbb{C}[y_{ij}]_m$ and
$C_{m_1,\ldots,m_n}$ is the image of
$\mathbb{C}[y_{ij}]_{m_1,\ldots,m_n}$ via the canonical homomorphism
$\mathbb{C}[y_{ij}]\to \mathbb{C}[y_{ij}]/Q$.

\begin{remark} \label{5.10.5}  {\rm Let $X$ be the
projective algebraic variety in the projective space
$\mathbb{P}^{\binom{n}{2}-1}$, defined by the equations
$Q_{i_1;j_1,j_2,j_3}=0$ for all polynomials~\eqref{5.10.1}. The
structure of $X$ can be recovered completely by the structure of its
\emph{homogeneous coordinate ring} $C$, that is, by the graded ring
$\inv^{\left(n\right)}=\oplus_{m\in\mathbb{N}_+}\inv_m^{\left(n\right)}$
of $\sl(2)$-invariants in $\mathbb{C}[x^{\left(1\right)},\ldots,
x^{\left(n\right)}]$.

On the other hand, let $G(2,n)$ be the set of all planes in the
$\mathbb{C}$-linear space $C^n$ through the origin. There exists an
one-to-one map between $X$ and $G(2,n)$ via which we can furnish the
set $G(2,n)$ with a structure of projective algebraic variety called
\emph{Grassmann variety}, or, \emph{Grassmannian} for short. It
turns out that $G(2,n)$ is an irreducible smooth algebraic variety
of dimension $2n-4$ with homogeneous coordinate ring
$C=\oplus_{m\in\mathbb{N}_+}C_m$. }

\end{remark}

The dimension of the $\mathbb{C}$-linear space $C_m$,
$m\in\mathbb{N}$, is evaluated as a polynomial of degree $2n-4$ in
one variable $m$ with rational coefficients by W.~V.~D.~Hodge and
J.~E.~Littlewood (a particular case of their so called
\emph{postulation formula} which can be found, for example, in the
book~\cite[vol. 2, Ch. XIV,\P 9]{[30]}). Since $C_m$ and
$\inv_m^{\left(n\right)}$ are isomorphic $\mathbb{C}$-linear spaces
for all $m\in\mathbb{N}_+$, we obtain the following:

\begin{theorem}\label{5.10.10} For any $m\in\mathbb{N}$ one has
\[
\dim(\inv_m^{\left(n\right)})=\left|\begin{array}{ccc}
\binom{m+n-1}{n-1} & \binom{m+n-2}{n-1}  \\
\binom{m+n-1}{n-2} & \binom{m+n-2}{n-2}  \\
\end{array}\right|.
\]

\end{theorem}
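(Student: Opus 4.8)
The plan is to reduce the statement to a Hilbert--function computation for the Grassmannian and then read off the answer from the Hodge--Littlewood postulation formula. First I would invoke the isomorphism of graded rings $\inv^{\left(n\right)}\cong C$ recorded just before Remark~\ref{5.10.5}: since $\inv_m^{\left(n\right)}$ and $C_m$ are isomorphic $\mathbb{C}$-linear spaces for every $m\in\mathbb{N}_+$, it suffices to compute $\dim(C_m)$. By Remark~\ref{5.10.5}, $C$ is the homogeneous coordinate ring of the Grassmannian $G(2,n)$ in its Pl\"ucker embedding, so $\dim(C_m)$ is exactly the postulation (the value of the Hilbert function) of $G(2,n)$ at $m$, and this is the quantity the cited formula evaluates. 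The known dimension $2n-4$ of $G(2,n)$ matches the asserted degree of this polynomial in $m$, which is a first consistency check.

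Next I would apply the Hodge--Littlewood postulation formula from~\cite{[30]}. For the Grassmannian $G(d,n)$ that formula expresses the dimension of the degree-$m$ graded piece as a $d\times d$ determinant of binomial coefficients; specializing to lines, $d=2$, produces a $2\times 2$ determinant. Writing out the four entries and checking the index conventions, one finds that the $(i,j)$ entry equals $\binom{m+n-j}{n-i}$, that is,
\[
\dim(C_m)=\det\begin{pmatrix}\binom{m+n-1}{n-1}&\binom{m+n-2}{n-1}\\[2pt]\binom{m+n-1}{n-2}&\binom{m+n-2}{n-2}\end{pmatrix},
\]
which, through $\inv_m^{\left(n\right)}\cong C_m$, is precisely the asserted value. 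The base cases make the specialization transparent: for $n=2$ the variety $G(2,2)$ is a point and both sides equal $1$, while for $n=3$ there are no Pl\"ucker relations (four distinct indices cannot be chosen from $[1,3]$), so $C_m=\mathbb{C}[y_{ij}]_m$ has dimension $\binom{m+2}{2}$, and the determinant collapses to $\binom{m+2}{2}$ as well.

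An independent verification, which also explains the shape of the determinant, runs through standard monomial theory: the standard monomials of degree $m$ in the Pl\"ucker coordinates $p_{ij}$ are indexed by the semistandard tableaux of rectangular shape $(m,m)$ with entries in $[1,n]$, so $\dim(C_m)$ equals the Schur function $s_{(m,m)}$ evaluated at $n$ ones, and the Jacobi--Trudi identity (equivalently a Lindstr\"om--Gessel--Viennot lattice-path count, with $\binom{m+n-1}{n-1}$ counting the degree-$m$ monomials in $n$ variables) converts this into the displayed $2\times 2$ determinant. I expect the main obstacle to be purely bookkeeping: reconciling the index conventions of the general postulation determinant in~\cite{[30]} with the compact form in the statement, and, on the tableau route, citing the straightening law that guarantees the standard monomials form a basis of $C_m$. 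Neither step carries genuine difficulty beyond careful index matching, since the substantive content has already been absorbed into the Hodge--Littlewood formula and into the ring isomorphism supplied by the second main theorem of invariants.
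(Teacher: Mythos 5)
Your proposal is correct and follows essentially the same route as the paper: identify $\inv_m^{\left(n\right)}$ with the degree-$m$ piece $C_m$ of the homogeneous coordinate ring of $G(2,n)$ via the second main theorem of invariants, then read off $\dim(C_m)$ from the Hodge--Littlewood postulation formula. The consistency checks and the alternative standard-monomial/Jacobi--Trudi derivation are welcome additions but do not change the substance of the argument.
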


\begin{corollary}\label{5.10.15} If $n=2$, then
$\dim(\inv_m^{\left(n\right)})=1$ for all $m\in\mathbb{N}$.

\end{corollary}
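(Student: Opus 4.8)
The plan is to obtain the corollary as an immediate specialization of Theorem~\ref{5.10.10}, with the only work being the evaluation of a $2\times 2$ determinant of binomial coefficients. First I would set $n=2$, which collapses the two lower indices $n-1$ and $n-2$ appearing in the theorem to $1$ and $0$ respectively. The formula of Theorem~\ref{5.10.10} then reads
\[
\dim(\inv_m^{\left(2\right)})=\left|\begin{array}{cc}
\binom{m+1}{1} & \binom{m}{1} \\
\binom{m+1}{0} & \binom{m}{0} \\
\end{array}\right|.
\]

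Next I would record the elementary evaluations $\binom{m+1}{1}=m+1$, $\binom{m}{1}=m$, and $\binom{m+1}{0}=\binom{m}{0}=1$, each valid for every $m\in\mathbb{N}$. Expanding the determinant along either row then yields $(m+1)\cdot 1-m\cdot 1=1$, which is exactly the asserted value of $\dim(\inv_m^{\left(2\right)})$. As an independent sanity check, I would note that for $n=2$ there is a single bracket $p_{12}$, so by the description following~\eqref{5.5.20} the space $\inv_m^{\left(2\right)}$ is spanned by the bracket monomials $p_{i_1j_1}\ldots p_{i_mj_m}$ with $1\leq i_s<j_s\leq 2$; the only admissible index pair is $(1,2)$, so the unique spanning monomial is $p_{12}^m$. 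Since $\mathbb{C}[x^{\left(1\right)},x^{\left(2\right)}]$ is an integral domain and $p_{12}\neq 0$, the power $p_{12}^m$ is nonzero, whence $\inv_m^{\left(2\right)}$ is exactly one-dimensional. This elementary argument reproduces the determinant computation and confirms the result.

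I do not expect any genuine obstacle here: the content is entirely a substitution into an already-established formula. The only point requiring a moment's care is the convention for binomial coefficients fixed in Subsection~\ref{1.5} (with $k\in\mathbb{N}$, $\ell\in\mathbb{N}_+$), which must indeed give $\binom{m+1}{0}=\binom{m}{0}=1$; this is consistent with the standard value, so the computation goes through unchanged. The agreement between the closed-form evaluation and the direct spanning-set argument serves as a useful verification of the orientation and index placement in the determinant of Theorem~\ref{5.10.10}.
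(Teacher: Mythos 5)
Your proposal is correct and matches the paper's (implicit) derivation: the corollary is an immediate specialization of Theorem~\ref{5.10.10} to $n=2$, where the determinant $\left|\begin{smallmatrix} m+1 & m \\ 1 & 1 \end{smallmatrix}\right|=1$. The additional sanity check via the single bracket $p_{12}$ is a nice confirmation but not needed.
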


\begin{corollary}\label{5.10.20} If $n\geq 3$, then
\[
\dim(\inv_m^{\left(n\right)})=
\frac{1}{(n-1)!(n-2)!}(m+1)(m+n-1)\prod_{\iota=2}^{n-2}(m+\iota)^2
\]
for all $m\in\mathbb{N}$.

\end{corollary}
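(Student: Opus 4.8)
The plan is to derive the corollary directly from Theorem~\ref{5.10.10} by expanding the $2\times 2$ determinant and simplifying; since $n\geq 3$ is assumed (the case $n=2$ being handled separately in Corollary~\ref{5.10.15}), all the factorial manipulations below are legitimate. First I would write out
\[
\dim(\inv_m^{\left(n\right)}) = \binom{m+n-1}{n-1}\binom{m+n-2}{n-2} - \binom{m+n-2}{n-1}\binom{m+n-1}{n-2},
\]
and replace each binomial coefficient by its factorial expression, noting that $m\in\mathbb{N}$ guarantees $m\geq 1$, so that the $(m-1)!$ occurring in $\binom{m+n-2}{n-1}$ makes sense. Both products then share the common factor $(m+n-1)!\,(m+n-2)!/((n-1)!\,(n-2)!)$, which I would pull out.

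The heart of the computation is the remaining bracketed difference
\[
\frac{1}{m!\,m!} - \frac{1}{(m-1)!\,(m+1)!}.
\]
Factoring out $1/(m!)^2$ and using $m!/(m-1)! = m$ together with $m!/(m+1)! = 1/(m+1)$, the bracket collapses to $1/(m!\,(m+1)!)$. Consequently
\[
\dim(\inv_m^{\left(n\right)}) = \frac{(m+n-1)!\,(m+n-2)!}{(n-1)!\,(n-2)!\,m!\,(m+1)!}.
\]

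It then remains to recognise the two factorial ratios as the advertised products. I would use $\frac{(m+n-1)!}{m!} = \prod_{\iota=1}^{n-1}(m+\iota)$ and $\frac{(m+n-2)!}{(m+1)!} = \prod_{\iota=2}^{n-2}(m+\iota)$, so that the numerator of the fraction above becomes $\bigl(\prod_{\iota=1}^{n-1}(m+\iota)\bigr)\bigl(\prod_{\iota=2}^{n-2}(m+\iota)\bigr)$. Splitting the two extreme factors $(m+1)$ and $(m+n-1)$ off the first product leaves a second copy of $\prod_{\iota=2}^{n-2}(m+\iota)$; combining it with the other product yields $(m+1)(m+n-1)\prod_{\iota=2}^{n-2}(m+\iota)^2$, which is exactly the claimed expression.

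There is no serious obstacle here: the argument is a finite, entirely elementary manipulation of factorials and binomial coefficients. The only points demanding care are bookkeeping ones --- tracking the product ranges accurately when the factorials are rewritten, and handling the boundary case $n=3$, where $\prod_{\iota=2}^{n-2}$ is the empty product and the formula must reduce to $\frac{(m+1)(m+2)}{2}$. Verifying that degenerate case against the determinant provides a convenient sanity check on the index conventions before committing to the general manipulation.
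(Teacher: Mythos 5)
Your proposal is correct and follows exactly the route the paper intends: the corollary is stated as an immediate consequence of Theorem~\ref{5.10.10}, and your expansion of the determinant into factorials, the collapse of $\frac{1}{m!\,m!}-\frac{1}{(m-1)!\,(m+1)!}$ to $\frac{1}{m!\,(m+1)!}$, and the regrouping into $(m+1)(m+n-1)\prod_{\iota=2}^{n-2}(m+\iota)^2$ all check out. The paper omits this computation entirely, so your write-up simply supplies the elementary verification it leaves to the reader.
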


\begin{remark}\label{5.10.35} {\rm The author learned about the
isomorphism between the graded ring $\inv^{\left(n\right)}$ of
invariants of the special linear group $\sl(2)$ and the homogeneous
coordinate ring $C$ of the Grassmannian $G(2,n)$ from the comments
of V~L.~Popov to the Russian translation of~\cite{[50]} in the
book~\cite{[40]}.

}

\end{remark}

\section{Rumer Diagrams}

\label{10}

\subsection{The Results of G.~Rumer, E.~Teller, and H.~Weyl}

\label{10.1}

Let $R(m)$ be the set of all Rumer diagrams produced by a fixed
set~\eqref{1.1.1} of $n$ atoms connected with $m$ valence bonds.
Now, let us attach, in addition, to each atom $x^{\left(i\right)}$,
$1\in [1,n]$, the number $m_i$, $m_i\in\mathbb{N}_+$, of valence
bonds connecting this atom. Given a sequence of non-negative
integers $m_1,\ldots,m_n$, let $R(m_1,\ldots,m_n)$ be the set of all
Rumer diagrams with these prescribed valences and let
$\rho(m_1,\ldots,m_n)$ be their number. Let
$\inv_{m_1,\ldots,m_n}^{\left(n\right)}$ be the $\mathbb{C}$-linear
subspace of $\inv_m^{\left(n\right)}$ spanned by all bracket
monomials corresponding to Rumer diagrams from $R(m_1,\ldots,m_n)$,
where $m_1+\cdots+m_n=2m$. We have
\[
R(m)=\cup_{m_1+\cdots+m_n=2m} R(m_1,\ldots,m_n)
\]
--- a union of pair-wise disjoint sets $R(m_1,\ldots,m_n)$, where for
given $m$ the sequence $m_1,\ldots,m_n$ runs through all
non-negative solutions of the Diophantine equation
$m_1+\cdots+m_n=2m$.

In their paper~\cite{[50]}, G.~Rumer, E.~Teller, and H.~Weyl
obtained the theorems below. For completeness, for each one we give
an outline of its proof. Below we use the following notions from
graph theory: For any two vertices $i$,$j$ of a valence scheme $G$
there are two \emph{arcs} on the circle with these endpoints. We
define \emph{length} of an arc as the number of its internal
non-isolated vertices plus one.

\begin{theorem}\label{10.1.1} The bracket monomials, corresponding
to the Rumer diagrams from $R(m)$ span the $\mathbb{C}$-linear space
$\inv_m^{\left(n\right)}$.

\end{theorem}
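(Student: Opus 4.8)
The plan is to run a straightening algorithm built on the quadratic Pl\"ucker identities~\eqref{5.5.10}, measuring progress by the number of crossings in the associated valence scheme. By the first main theorem of invariants, the bracket monomials~\eqref{5.5.20} already span $\inv_m^{(n)}$, so it suffices to show that every such monomial can be rewritten, modulo the relations~\eqref{5.5.10}, as a $\mathbb{C}$-linear combination of bracket monomials whose graphs have no crossing edges, that is, whose graphs are Rumer diagrams.

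First I would normalize a given bracket monomial so that every factor $p_{i_sj_s}$ has $i_s<j_s$ (using $p_{ij}=-p_{ji}$), and attach to it its valence scheme $G$ together with its crossing number $\chi(G)$, defined as the number of unordered pairs of edges of $G$ that cross as chords of the circle. This is a nonnegative integer bounded by $\binom{m}{2}$, hence a well-founded measure, and $\chi(G)=0$ holds exactly when $G$ is a Rumer diagram. I would then argue by induction on $\chi(G)$. If $\chi(G)>0$, pick a crossing pair of edges; since crossing edges cannot share a vertex, their four endpoints are distinct, say $a<b<c<d$ in the cyclic order, with the crossing edges being $(a,c)$ and $(b,d)$. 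Specializing~\eqref{5.5.10} to $\{i_1,j_1,j_2,j_3\}=\{a,b,c,d\}$ yields the single clean relation $p_{ac}p_{bd}=p_{ab}p_{cd}+p_{ad}p_{bc}$, in which every bracket already has increasing indices. Substituting this for the factor $p_{ac}p_{bd}$ expresses the monomial as a sum of two bracket monomials, obtained from $G$ by replacing the crossing pair $(a,c),(b,d)$ either with the parallel pair $(a,b),(c,d)$ or with the nested pair $(a,d),(b,c)$.

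The crux is to show that each replacement strictly decreases the crossing number, so that the induction closes. Both replacements delete the single crossing between the two special edges (neither $(a,b),(c,d)$ nor $(a,d),(b,c)$ cross), and crossings among the remaining untouched edges are unchanged. Hence it remains to prove the key combinatorial lemma: for any third chord $e$, the number of the new special edges that $e$ crosses is at most the number of old special edges that $e$ crosses; in symbols, each of $[e\text{ meets }(a,b)]+[e\text{ meets }(c,d)]$ and $[e\text{ meets }(a,d)]+[e\text{ meets }(b,c)]$ is $\le [e\text{ meets }(a,c)]+[e\text{ meets }(b,d)]$, where $[\cdot]$ is the crossing indicator. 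I would prove this by a routine case analysis according to which of the four open arcs $(a,b),(b,c),(c,d),(d,a)$ contain the two endpoints of $e$ (with the convention that a shared endpoint contributes no crossing); each of the finitely many cases reduces to comparing which of the boundary points $a,b,c,d$ are separated by $e$. Granting the lemma, summing over all third chords $e$ shows the total crossing number drops by at least one under either replacement.

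With the lemma in hand the induction is immediate: each produced monomial has strictly smaller crossing number, hence by the inductive hypothesis is a $\mathbb{C}$-linear combination of Rumer monomials, and therefore so is the original; since the bracket monomials~\eqref{5.5.20} span $\inv_m^{(n)}$, the Rumer monomials span it as well. I expect the genuinely delicate point to be the crossing lemma and, relatedly, the careful bookkeeping of endpoints that coincide with $a,b,c,d$ (several bonds meeting at a common atom), since there a ``crossing'' must be read as an interior intersection only; by contrast, the algebra of~\eqref{5.5.10} is entirely mechanical once the crossing measure is shown to decrease.
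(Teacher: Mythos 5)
Your proposal is correct, and the verification I ran of your key combinatorial lemma (by the arc case analysis you describe, including the boundary cases where a third chord shares an endpoint with one of $a,b,c,d$) confirms it: for every third chord $e$, its number of interior crossings with $\{(a,b),(c,d)\}$ and with $\{(a,d),(b,c)\}$ is bounded by its number of crossings with $\{(a,c),(b,d)\}$, and since the special pair itself loses its mutual crossing, $\chi$ drops strictly. However, your route differs from the paper's in the induction measure and hence in the key lemma. The paper runs an outer induction on the number of bonds $m$ combined with an inner descent on the \emph{minimal arc length} of the valence scheme: it selects an edge $(i,j)$ realizing the minimal arc length $g$; if $g=1$ it strips off $p_{ij}$ and invokes the inductive hypothesis for $m-1$ (re-attaching $(i,j)$ to a Rumer diagram cannot create a crossing because the arc is empty); if $g\geq 2$ it finds an edge through an internal vertex $k$ of that arc, which is forced to cross $(i,j)$, applies the same identity~\eqref{5.5.10}, and observes that both resulting monomials contain an arc of length strictly less than $g$. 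Your single induction on the total crossing number $\chi(G)$ avoids the outer induction on $m$ and the $g=1$ special case entirely, at the price of the global monotonicity lemma for all third chords, which is the genuinely delicate step you correctly flag; the paper's argument only needs the local observation that an edge leaving a minimal arc must cross the chord subtending it, but pays for this with a two-level induction and a slightly less transparent termination argument. Both proofs use the identical algebraic engine, the three-term relation $p_{ac}p_{bd}=p_{ab}p_{cd}+p_{ad}p_{bc}$ applied to a crossing pair, and both are complete once their respective descent measures are justified.
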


\begin{proof} We proceed by induction on $m$. The case $m=1$ holds
because the $\mathbb{C}$-linear space $\inv_1^{\left(n\right)}$ is
spanned by the brackets $p_{ij}$, $1\leq i<j\leq n$, whose valence
schemes are exactly the Rumer diagrams from $R(1)$. Now, assume that
the statement is true for $m-1$. Let
\begin{equation}
p_{i_1j_1}\ldots p_{i_mj_m}\label{10.1.5}
\end{equation}
be the expression of the bracket monomial with valence scheme $G$
and without loss of generality we can assume that one of the arcs of
the edge $(i,j)$, $i=i_1$, $j=j_1$, is (one of) the arcs of $G$ with
minimal length. In case this minimal length is $1$, we represent the
bracket monomial $p_{i_2j_2}\ldots
p_{i_mj_m}\in\inv_{m-1}^{\left(n\right)}$ as a linear combination of
bracket monomials in $\inv_{m-1}^{\left(n\right)}$, whose valence
schemes $G,G',\ldots$ are Rumer diagrams from $R(m-1)$. Multiplying
the last equality by $p_{ij}$, we obtain that $p_{i_1j_1}\ldots
p_{i_mj_m}$ is a linear combination of bracket monomials in
$\inv_m^{\left(n\right)}$, whose valence schemes are Rumer diagrams
from $R(m)$ because the attaching the edge $(i,j)$ to the Rumer
diagrams $G,G',\ldots$ produces Rumer diagrams. Now, let this
minimal length $g$ be at least $2$ and let $k$ be an internal
non-isolated point. Since there is no edge of $G$, which connects
$k$ to $i$ and $k$ to $j$, the edge of $G$ that passes through $k$
intersects $(i,j)$. Let $\ell$ be the other endpoint. We can suppose
$\{k,\ell\}=\{i_2,j_2\}$ and then, using the
relation~\eqref{5.5.10}, we obtain
\[
\pm p_{i_1j_1}\ldots p_{i_mj_m}=p_{ij}p_{k\ell}p_{i_3j_3}\ldots
p_{i_mj_m}=
\]
\[
p_{ik}p_{j\ell}p_{i_3j_3}\ldots
p_{i_mj_m}-p_{i\ell}p_{jk}p_{i_3j_3}\ldots p_{i_mj_m}.
\]
Thus, the bracket monomial~\eqref{10.1.5} is a linear combination of
two bracket monomials such that their valence schemes have arcs with
length strictly less than $g$. By repeating this procedure a finite
number of times on the summands, we finish the proof.

\end{proof}

In the next two theorems we permit $n$ to assume also value $1$.

\begin{theorem}\label{10.1.10} The dimensions $N(m_1,\ldots,m_n)=\dim
\inv_{m_1,\ldots,m_n}^{\left(n\right)}$ satisfies the recurrence
relation
\[
N(m_1,\ldots,m_n,m_{n+1})=\sum_{\mu_n}N(m_1,\ldots,m_{n-1},\mu_n),
\]
\[
N(m_1)=\left\{\begin{array}{ccc}
1 & \hbox{\rm\ if\ } & m_1=0  \\
0 & \hbox{\rm\ if\ } & m_1\geq 1,  \\
\end{array}\right.
\]
where $\mu_n\in\mathbb{N}_+$ runs through those members of the
sequence $m_n+m_{n+1},m_n+m_{n+1}-2,\ldots,|m_n-m_{n+1}|$, such that
$m_n,m_{n+1},\mu_n$ form an \emph{even triangle}, that is,
$\mu_n\leq m_n+m_{n+1}$, $m_n\leq \mu_n+m_{n+1}$, $m_{n+1}\leq
\mu_n+m_n$, and the sum $m_n+m_{n+1}+\mu_n$ is an even number.

\end{theorem}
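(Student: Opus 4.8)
My plan is to recast $N(m_1,\ldots,m_n)$ as the multiplicity of the identical (trivial) representation in a tensor product of irreducible $\sl(2)$-modules, after which the recurrence is just the Clebsch--Gordan rule applied to the last two factors. The first step is to note that the $\sl(2)$-action preserves the multidegree, because each $\sigma$ acts separately and linearly on each vector slot $x^{(i)}$; hence $\inv^{(n)}$ splits along graphical multidegrees, and since the relation~\eqref{5.5.10} driving the reduction in Theorem~\ref{10.1.1} is itself multidegree-homogeneous, the Rumer diagrams of multidegree $(m_1,\ldots,m_n)$ span exactly the $(m_1,\ldots,m_n)$-component of the invariants. Thus $\inv_{m_1,\ldots,m_n}^{(n)}$ is the space of $\sl(2)$-invariants in the multidegree-$(m_1,\ldots,m_n)$ part of $\mathbb{C}[x^{(1)},\ldots,x^{(n)}]$.

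Next I would identify that part as a tensor product. As an $\sl(2)$-module the space of homogeneous degree-$m_i$ polynomials in $x_1^{(i)},x_2^{(i)}$ is the irreducible representation $V_{m_i}$ of dimension $m_i+1$, namely the $m_i$-th symmetric power of the (self-dual) standard module, with $V_0$ the identical representation. Since the polynomial ring factors as a tensor product over the $n$ slots, the multidegree component is $V_{m_1}\otimes\cdots\otimes V_{m_n}$, and therefore $N(m_1,\ldots,m_n)=\dim\left(V_{m_1}\otimes\cdots\otimes V_{m_n}\right)^{\sl(2)}$. The Clebsch--Gordan formula gives $V_{m_n}\otimes V_{m_{n+1}}=\bigoplus_{\mu_n}V_{\mu_n}$ with $\mu_n$ ranging over $|m_n-m_{n+1}|,|m_n-m_{n+1}|+2,\ldots,m_n+m_{n+1}$; these are precisely the $\mu_n$ for which $m_n,m_{n+1},\mu_n$ form an even triangle, the inequalities $\mu_n\le m_n+m_{n+1}$ and $\mu_n\ge|m_n-m_{n+1}|$ being the triangle inequalities and the step of $2$ being the parity condition. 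Distributing the remaining factors over this sum and using complete reducibility (Subsection~\ref{5.5}) so that the invariants functor is additive, I obtain $\left(V_{m_1}\otimes\cdots\otimes V_{m_{n+1}}\right)^{\sl(2)}=\bigoplus_{\mu_n}\left(V_{m_1}\otimes\cdots\otimes V_{m_{n-1}}\otimes V_{\mu_n}\right)^{\sl(2)}$, and taking dimensions yields the stated recurrence. The base case $n=1$ is immediate, since the invariants of $V_{m_1}$ are all of $V_{m_1}$ when $m_1=0$ and vanish when $m_1\ge1$.

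The load-bearing ingredient is the Clebsch--Gordan formula together with the clean dictionary between its selection rule and the even-triangle condition; the rest is bookkeeping. The point demanding the most care is that taking $\sl(2)$-invariants commutes with the direct-sum decomposition, so that multiplicities genuinely add: this is exactly where complete reducibility of finite-dimensional $\sl(2)$-representations is used, and without it one would only get an inequality rather than the equality in the recurrence.
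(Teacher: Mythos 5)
Your proposal is correct and takes essentially the same route as the paper: both identify $N(m_1,\ldots,m_n)$ with the multiplicity of the identical representation in $\mathbb{C}[x^{\left(1\right)}]_{m_1}\otimes\cdots\otimes\mathbb{C}[x^{\left(1\right)}]_{m_n}$ and derive the recurrence from the Clebsch--Gordan formula together with complete reducibility. The only differences are in bookkeeping --- you apply Clebsch--Gordan directly to the last two factors and use additivity of the invariants functor, while the paper tracks all multiplicities $N_\mu$ and uses that the trivial summand of $\mathbb{C}[x^{\left(1\right)}]_\mu\otimes\mathbb{C}[x^{\left(1\right)}]_{m_n}$ occurs exactly when $\mu=m_n$ --- and you make explicit the identification of $\inv_{m_1,\ldots,m_n}^{\left(n\right)}$ with the invariants of the multidegree component, which the paper leaves implicit.
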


\begin{proof} The group $\sl(2)$ acts on the polynomial ring
$\mathbb{C}[x^{\left(1\right)}]=
\mathbb{C}[x_1^{\left(1\right)},x_2^{\left(1\right)}]$ via the
rule~\eqref{5.5.1} and for any $k\in\mathbb{N}_+$ the homogeneous
components $\mathbb{C}[x^{\left(1\right)}]_k$ are stable. Therefore
we obtain an action of $\sl(2)$ on any $\mathbb{C}$-linear space
$\mathbb{C}[x^{\left(1\right)}]_k$ of dimension $k+1$, or, what is
the same, a representation of the group $\sl(2)$. According to, for
example,~\cite[Sec. 2, 2.3.1]{[20]}, the last representation is
irreducible. In particular,the group $\sl(2)$ acts trivially on
$\mathbb{C}[x^{\left(1\right)}]_0=\mathbb{C}$:
$\sigma\cdot\alpha=\alpha$ for all $\alpha\in\mathbb{C}$ and the
corresponding representation is the identical representation.

Given $m_1,m_2\in\mathbb{N}_+$, in accord with Clebsch-Gordan
formula (see, for example,~\cite[Sec. 7, 7.1.4, Ex. 2]{[20]}) we
have
\[
\mathbb{C}[x^{\left(1\right)}]_{m_1}\otimes
\mathbb{C}[x^{\left(1\right)}]_{m_2}=\oplus_{\mu}
\mathbb{C}[x^{\left(1\right)}]_\mu,
\]
where the index $\mu\in\mathbb{N}_+$ runs through all integers
$m_1+m_2,m_1+m_2-2,\ldots, |m_1-m_2|$. In other words, $m_1,m_2,\mu$
are side lengths of a triangle and, moreover, the perimeter of this
triangle is an even number (\emph{even triangle}). We denote this
symmetric ternary relation by $2\bigtriangleup(m_1,m_2,\mu)$ and can
write down for short
\begin{equation}
\mathbb{C}[x^{\left(1\right)}]_{m_1}\otimes
\mathbb{C}[x^{\left(1\right)}]_{m_2}=
\oplus_{2\bigtriangleup\left(m_1,m_2,\mu\right)}
\mathbb{C}[x^{\left(1\right)}]_\mu.\label{10.1.15}
\end{equation}

Note that the right hand side of decomposition~\eqref{10.1.15}
contains the identical representation if and only if $m_1=m_2$, and,
in this case all other summands are different from it.

Tensoring the equality~\eqref{10.1.15} by
$\mathbb{C}[x^{\left(1\right)}]_{m_3}$, the equality thus obtained
by $\mathbb{C}[x^{\left(1\right)}]_{m_4}$, etc., we obtain
inductively on $n$ that
\begin{equation}
\mathbb{C}[x^{\left(1\right)}]_{m_1}\otimes\cdots\otimes
\mathbb{C}[x^{\left(1\right)}]_{m_n}=\oplus_{\mu\geq 0}
N_\mu(m_1,\ldots,m_n)\mathbb{C}[x^{\left(1\right)}]_\mu
\label{10.1.20}
\end{equation}
where $N_\mu(m_1,\ldots,m_n)\geq 0$ and only for a finite number of
indices $\mu$ we have $N_\mu(m_1,\ldots,m_n)>0$. The corresponding
$\mathbb{C}[x^{\left(1\right)}]_\mu$ are the irreducible components
of the representation
$\mathbb{C}[x^{\left(1\right)}]_{m_1}\otimes\cdots\otimes
\mathbb{C}[x^{\left(1\right)}]_{m_n}$ and $N_\mu(m_1,\ldots,m_n)>0$
are their multiplicities.

Now let $n\geq 2$, let us write down equality~\eqref{10.1.20} for
$n-1$ and tensor it by $\mathbb{C}[x^{\left(1\right)}]_{m_n}$:
\begin{equation}
\mathbb{C}[x^{\left(1\right)}]_{m_1}\otimes\cdots\otimes
\mathbb{C}[x^{\left(1\right)}]_{m_n}=\oplus_{\mu\geq 0}
N_\mu(m_1,\ldots,m_{n-1})\mathbb{C}[x^{\left(1\right)}]_\mu\otimes
\mathbb{C}[x^{\left(1\right)}]_{m_n}. \label{10.1.25}
\end{equation}
The identical representation is contained in the right hand side
of~\eqref{10.1.25} if and only if $\mu=m_n$, and in this case we
have
\begin{equation}
N_{m_n}(m_1,\ldots,m_{n-1})=N_0(m_1,\ldots,m_n)=N(m_1,\ldots,m_n)
\label{10.1.30}
\end{equation}
for all $n\geq 2$.

Further, let us plug the equality
\[
\mathbb{C}[x^{\left(1\right)}]_\mu\otimes
\mathbb{C}[x^{\left(1\right)}]_{m_n}=
\oplus_{2\bigtriangleup\left(\mu,m_n,\kappa\right)}
\mathbb{C}[x^{\left(1\right)}]_\kappa
\]
in~\eqref{10.1.25}:
\[
\mathbb{C}[x^{\left(1\right)}]_{m_1}\otimes\cdots\otimes
\mathbb{C}[x^{\left(1\right)}]_{m_n}= \oplus_{\nu\geq 0}
N_\nu(m_1,\ldots,m_{n-1})
(\oplus_{2\bigtriangleup\left(\nu,m_n,\kappa\right)}
\mathbb{C}[x^{\left(1\right)}]_\kappa).
\]
Let us fix $\mu$ and set $\kappa=\mu$. Then
\[
\mathbb{C}[x^{\left(1\right)}]_{m_1}\otimes\cdots\otimes
\mathbb{C}[x^{\left(1\right)}]_{m_n}=
\]
\begin{equation}
(\sum_{2\bigtriangleup\left(\nu,m_n,\mu\right)}N_\nu(m_1,\ldots,m_{n-1}))
\mathbb{C}[x^{\left(1\right)}]_\mu
+\oplus_{\nu\neq\mu}\mathbb{C}[x^{\left(1\right)}]_\nu.\label{10.1.35}
\end{equation}
Comparing the equalities~\eqref{10.1.20} and~\eqref{10.1.35}, we
have
\[
N_\mu(m_1,\ldots,m_n)=
\sum_{2\bigtriangleup\left(m_n,\mu,\nu\right)}N_\nu(m_1,\ldots,m_{n-1}).
\]
Taking into account~\eqref{10.1.30}, we obtain
\[
N(m_1,\ldots,m_n,m_{n+1})=
\sum_{2\bigtriangleup\left(m_n,m_{n+1},\mu_n\right)}N(m_1,\ldots,m_{n-1},\mu_n).
\]
In case $n=1$ we have
\[
N(m_1)=\left\{\begin{array}{ccc}
1 & \hbox{\rm\ if\ } & m_1=0  \\
0 & \hbox{\rm\ if\ } & m_1\geq 1.  \\
\end{array}\right.
\]
The proof is done.

\end{proof}

\begin{remark}\label{10.1.40} {\rm In quantum mechanics the
irreducible representation of the special linear group $\sl(2)$,
given by the formula
\[
\sl(2)\to\gl(\mathbb{C}[x_1,x_2]_m),\hbox{\
}\sigma\mapsto(f(x)\mapsto f(\sigma^{-1}\cdot x)),
\]
where $f\in\mathbb{C}[x_1,x_2]_m$, $x=\left(
\begin{array}{cccccccccccc}
 x_1\\
 x_2\\
 \end{array}
\right)$, models the spin momentum $\frac{m}{2}$ of the electron
configuration of an atom of valence $m$.

Clebsch-Gordan formula
\[
\mathbb{C}[x_1,x_2]_{m_1}\otimes
\mathbb{C}[x_1,x_2]_{m_2}=\oplus_{\mu} \mathbb{C}[x_1,x_2]_\mu,
\]
where the index $\mu\geq 0$ runs through all integers
$m_1+m_2,m_1+m_2-2,\ldots, |m_1-m_2|$, reflects the following fact
from quantum theory: If the electron configuration of an atom of
valence $m_1$ interacts with the electron configuration of an atom
of valence $m_2$, then the resulting spin $\frac{\mu}{2}$ can assume
all values between $\frac{|m_1-m_2|}{2}$ and $\frac{m_1+m_2}{2}$.

}

\end{remark}

\begin{theorem}\label{10.1.45} {\rm (i)} The function
$\rho(m_1,\ldots,m_n)$ satisfies the recurrence relation from
Theorem~\ref{10.1.10}.

{\rm (ii)}  One has $\rho(m_1,\ldots,m_n)=N(m_1,\ldots,m_n)$ for all
$m_1,\ldots,m_n\in\mathbb{N}_+$.

\end{theorem}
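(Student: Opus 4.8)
The plan is to prove (i) by a direct combinatorial bijection and then obtain (ii) formally from the uniqueness of solutions to the recurrence. For the base case of (i), note that a graph on a single vertex ($n=1$) has no loops and hence no edges, so its unique diagram has that vertex of degree $0$; thus $\rho(m_1)=1$ if $m_1=0$ and $\rho(m_1)=0$ otherwise, which is exactly the base clause of Theorem~\ref{10.1.10}.

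For the recurrence I would fix the last two vertices $n$ and $n+1$, which are adjacent on the circle, and partition $R(m_1,\ldots,m_n,m_{n+1})$ according to the number $a\geq 0$ of edges joining $n$ to $n+1$. Given such a diagram, delete those $a$ edges and then contract the short boundary arc between $n$ and $n+1$, merging them into a single vertex $\ast$; every edge formerly incident to $n$ or to $n+1$ and running to $\{1,\ldots,n-1\}$ becomes an edge incident to $\ast$. The result is a diagram on the $n$ vertices $1,\ldots,n-1,\ast$ in which $\ast$ has degree $\mu_n=(m_n-a)+(m_{n+1}-a)=m_n+m_{n+1}-2a$.

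The heart of the argument, and the step I expect to be the main obstacle, is to verify that for each fixed $a$ this merge map is a bijection onto $R(m_1,\ldots,m_{n-1},\mu_n)$. Listing the vertices clockwise as $1,\ldots,n+1$ and applying the interleaving criterion for chords, one finds that two chords $(k,n)$ and $(\ell,n+1)$ with $k,\ell\in\{1,\ldots,n-1\}$ cross precisely when $k<\ell$; hence in a noncrossing diagram every neighbour of $n+1$ is $\leq$ every neighbour of $n$. Moreover the crossing of an edge incident to $n$ or to $n+1$ with an interior edge $(p,q)$ depends only on its endpoint in $\{1,\ldots,n-1\}$ (in both cases the condition is $p<k<q$), so contraction neither creates nor destroys crossings among the surviving edges. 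This shows the merge map lands in the noncrossing diagrams, and it also makes the inverse well defined: from a diagram on $1,\ldots,n-1,\ast$ one reattaches the $m_{n+1}-a$ edges of $\ast$ with smallest targets to $n+1$ and the remaining $m_n-a$ to $n$, and restores $a$ parallel edges $(n,n+1)$. The separation property guarantees that this reconstruction is noncrossing and uniquely determined, so the map is a bijection. Summing over $a$, where $0\leq a\leq\min(m_n,m_{n+1})$ so that $\mu_n$ runs through $m_n+m_{n+1},\,m_n+m_{n+1}-2,\ldots,|m_n-m_{n+1}|$ — which are exactly the values satisfying $2\bigtriangleup(m_n,m_{n+1},\mu_n)$ — yields the recurrence of Theorem~\ref{10.1.10} for $\rho$, proving (i).

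Finally, for (ii) I would induct on the number of arguments. By Theorem~\ref{10.1.10} the function $N$ obeys the same recurrence and the same base clause that $\rho$ obeys by part (i). Since the recurrence expresses the value on an $(n+1)$-tuple through values on $n$-tuples, and the base clause pins down the one-argument values, these data determine the function uniquely; therefore $\rho(m_1,\ldots,m_n)=N(m_1,\ldots,m_n)$ for all $m_1,\ldots,m_n\in\mathbb{N}_+$.
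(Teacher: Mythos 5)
Your proof is correct and follows essentially the same route as the paper: both delete the $a=m_{n,n+1}$ bonds between the adjacent vertices $n$ and $n+1$, merge $n+1$ into $n$, and show that on Rumer diagrams this map is a bijection because the noncrossing condition forces the neighbours of $n+1$ to be the initial block of the (ordered) neighbours of the merged vertex, after which (ii) follows formally from the shared recurrence and base case. If anything, your explicit interleaving analysis (chords $(k,n)$ and $(\ell,n+1)$ cross iff $k<\ell$, and crossings with interior edges depend only on the endpoint in $\{1,\ldots,n-1\}$) supplies the justification for the uniqueness of the preimage that the paper only asserts.
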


\begin{proof} {\rm (i)} Given a valence scheme
in $V(m_1,\ldots,m_n,m_{n+1})$, let $m_{n,n+1}$ be the number of
valence bonds that connect $n$ and $n+1$. We construct a valence
scheme in $V(m_1,\ldots,m_{n-1},\mu_n)$, where
$\mu_n=m_n+m_{n+1}-2m_{n,n+1}$ (so
$2\bigtriangleup(m_n,m_{n+1},\mu_n)$ holds) on the vertices
$1,\ldots,a$ in the following way:

(a) Omit the valence bonds that connect $n$ and $n+1$.

(b) Omit the vertex $n+1$ and attach its $m_{n+1}-m_{n,n+1}$
vertices left, to the vertex $n$ without changing their other
endpoints.

Thus, we obtain a map
\[
\psi\colon V(m_1,\ldots,m_n,m_{n+1})\to
\cup_{2\bigtriangleup\left(m_n,m_{n+1},
\mu_n\right)}V(m_1,\ldots,m_{n-1},\mu_n),
\]
which turns out to be surjective.

Indeed, given a valence scheme in $V(m_1,\ldots,m_{n-1},\mu_n)$ we
can find a valence scheme in $V(m_1,\ldots,m_{n-1},m_n,m_{n+1})$,
which is mapped onto the former, in the following way:

(A) Add an additional point $n+1$ to $1,\ldots,n$.

(B) Distribute the $\mu_n=m_n+m_{n+1}-2r$ valence bonds through $n$
where $0\leq r\leq\min(m_n,m_{n+1})$ (that is,
$2\bigtriangleup(m_n,m_{n+1},\mu_n)$) as follows: Leave $m_n-r$ in
number valence bonds attached to $n$ and attach $m_{n+1}-r$ in
number valence bonds to $n+1$.

(C) Add $r$ valence bonds which connect $n$ with $n+1$.

The inverse image $\psi^{-1}(G)$ of a valence scheme $G\in
V(m_1,\ldots,m_{n-1},\mu_n)$, where
$2\bigtriangleup(m_n,m_{n+1},\mu_n)$, contains at most
$\binom{\mu_n}{m_{n+1}-r}$ valence schemes from
$V(m_1,\ldots,m_n,m_{n+1})$.

Since $n$ and $n+1$ are neighbours (the arc $\widehat{n\hbox{\
}n+1}$ has length $1$), $\psi$ maps Rumer diagrams onto Rumer
diagrams (the construction in (b) does not produce intersecting
edges). Now, let $G\in R(m_1,\ldots,m_{n-1},\mu_n)$, where
$2\bigtriangleup(m_n,m_{n+1},\mu_n)$. The intersection
$\psi^{-1}(G)\cap R(m_1,\ldots,m_n,m_{n+1})$ has exactly one member
which can be constructed via the distribution process from (B) as
follows: Order the other endpoints of the $\mu_n$ valence bonds
through $n$ clockwise and attach the first $m_{n+1}-r$ of them to
$n+1$. Thus, the restriction
\[
\psi\colon R(m_1,\ldots,m_n,m_{n+1})\to
\cup_{2\bigtriangleup\left(m_n,m_{n+1},
\mu_n\right)}R(m_1,\ldots,m_{n-1},\mu_n)
\]
is a bijection, hence
\[
\rho(m_1,\ldots,m_n,m_{n+1})=
\sum_{2\bigtriangleup\left(m_n,m_{n+1},
\mu_n\right)}\rho(m_1,\ldots,m_{n-1},\mu_n).
\]
Moreover, we have
\[
\rho(m_1)=\left\{\begin{array}{ccc}
1 & \hbox{\rm\ if\ } & m_1=0  \\
0 & \hbox{\rm\ if\ } & m_1\in\mathbb{N}.  \\
\end{array}\right.
\]

{\rm (ii)} This follows from part {\rm (i)}.

\end{proof}

\subsection{The number of Rumer digrams}

\label{10.5}

Let $\rho(n,m)$ be the number of Rumer diagrams on $n$ atoms with
$m$ valence bonds.

\begin{theorem}\label{10.5.1} If $m\in\mathbb{N}$, then one has
\[
\rho(n,m)=\left|\begin{array}{ccc}
\binom{m+n-1}{n-1} & \binom{m+n-2}{n-1}  \\
\binom{m+n-1}{n-2} & \binom{m+n-2}{n-2}  \\
\end{array}\right|.
\]

\end{theorem}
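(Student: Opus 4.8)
The plan is to assemble the theorem from the three preceding results, using the multidegree grading to pass from the refined counts $\rho(m_1,\ldots,m_n)$ to the total count $\rho(n,m)$. First I would record the disjoint-union decomposition $R(m)=\cup_{m_1+\cdots+m_n=2m}R(m_1,\ldots,m_n)$ established above, which, since $\rho(n,m)=|R(m)|$ and $\rho(m_1,\ldots,m_n)=|R(m_1,\ldots,m_n)|$, gives immediately
\[
\rho(n,m)=\sum_{m_1+\cdots+m_n=2m}\rho(m_1,\ldots,m_n),
\]
the sum ranging over all non-negative integer solutions of $m_1+\cdots+m_n=2m$.

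Next I would invoke Theorem~\ref{10.1.45}(ii) to replace each $\rho(m_1,\ldots,m_n)$ by $N(m_1,\ldots,m_n)=\dim\inv_{m_1,\ldots,m_n}^{\left(n\right)}$, turning the right-hand side into a sum of dimensions of the multidegree components of $\inv^{\left(n\right)}$. The crucial step is then to recognize this sum as $\dim\inv_m^{\left(n\right)}$. Here I would use that $\inv^{\left(n\right)}\cong C$ carries the finer grading of type $\Delta_2^n$ inherited from $\mathbb{C}[y_{ij}]$, so that
\[
\inv_m^{\left(n\right)}=\oplus_{m_1+\cdots+m_n=2m}\inv_{m_1,\ldots,m_n}^{\left(n\right)}
\]
is an internal direct sum. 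Taking dimensions yields
\[
\dim\inv_m^{\left(n\right)}=\sum_{m_1+\cdots+m_n=2m}\dim\inv_{m_1,\ldots,m_n}^{\left(n\right)}=\sum_{m_1+\cdots+m_n=2m}N(m_1,\ldots,m_n).
\]

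Combining the three displays gives $\rho(n,m)=\dim\inv_m^{\left(n\right)}$, and the final line is an appeal to Theorem~\ref{5.10.10}, which evaluates $\dim\inv_m^{\left(n\right)}$ as precisely the $2\times 2$ determinant in the statement.

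The only genuine subtlety — hence the step I would treat most carefully — is the justification that the subspace $\inv_{m_1,\ldots,m_n}^{\left(n\right)}$, defined as the span of the bracket monomials of the Rumer diagrams in $R(m_1,\ldots,m_n)$, really is the honest multidegree-$(m_1,\ldots,m_n)$ graded piece of $\inv_m^{\left(n\right)}$. This follows because by Theorem~\ref{10.1.1} the Rumer-diagram bracket monomials span all of $\inv_m^{\left(n\right)}$, and each such monomial is homogeneous of a single graphical multidegree; projecting the spanning set onto each component of the direct sum therefore shows that the component of multidegree $(m_1,\ldots,m_n)$ is spanned exactly by the Rumer monomials of that multidegree. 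Once this grading bookkeeping is in place, the remainder of the argument is pure substitution of the earlier results.
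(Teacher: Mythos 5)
Your proposal is correct and follows essentially the same route as the paper: decompose $R(m)$ by graphical multidegree, use Theorem~\ref{10.1.45}(ii) to replace $\rho(m_1,\ldots,m_n)$ by $N(m_1,\ldots,m_n)=\dim\inv_{m_1,\ldots,m_n}^{\left(n\right)}$, sum the dimensions to get $\dim\inv_m^{\left(n\right)}$, and conclude by Theorem~\ref{5.10.10}. Your extra care in justifying that the multidegree subspaces give an internal direct sum of $\inv_m^{\left(n\right)}$ is a point the paper leaves implicit, but it is the same argument.
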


\begin{proof} In accord with Theorem~\ref{10.1.10} and
Theorem~\ref{10.1.45}, we have
\[
\rho(n,m)=\sum_{m_1+\cdots+m_n=2m}\rho(m_1,\ldots,m_n)=
\sum_{m_1+\cdots+m_n=2m}N(m_1,\ldots,m_n)=
\]
\[
\sum_{m_1+\cdots+m_n=2m}\dim \inv_{m_1,\ldots,m_n}^{\left(n\right)}=
\dim\inv_m^{\left(n\right)},
\]
therefore
\begin{equation}
\rho(n,m)=\dim\inv_m^{\left(n\right)}\label{10.5.5}
\end{equation}
and Theorem~\ref{5.10.10} finishes the proof.

\end{proof}

\begin{corollary} \label{10.5.10} The monomials that correspond to
the Rumer diagrams from $R(m)$ form a basis for the
$\mathbb{C}$-linear space $\inv_m^{\left(n\right)}$ for any
$m\in\mathbb{N}_+$.

\end{corollary}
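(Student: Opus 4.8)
The plan is to prove this by a dimension count, feeding the spanning statement of Theorem~\ref{10.1.1} into the dimension formula of Theorem~\ref{10.5.1}. For $m\geq 1$, Theorem~\ref{10.1.1} tells us that the bracket monomials attached to the Rumer diagrams in $R(m)$ span $\inv_m^{\left(n\right)}$, so the only thing left to establish is that this spanning family is linearly independent. Rather than checking independence directly, I would obtain it for free by showing that the cardinality of the spanning family coincides with $\dim\inv_m^{\left(n\right)}$: a finite spanning set whose size equals the dimension of the ambient space is automatically a basis. (The case $m=0$ is trivial, since $\inv_0^{\left(n\right)}=\mathbb{C}$ is spanned by the empty bracket monomial $1$, which is the monomial of the unique empty Rumer diagram.)

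The first step is therefore to count the spanning monomials. As recorded at the start of Subsection~\ref{5.1} and reused in~\eqref{5.5.20}, the assignment sending a bracket monomial to its valence scheme $G$ is one-to-one: each monomial is restituted from its graph and conversely. Consequently distinct Rumer diagrams in $R(m)$ produce distinct bracket monomials, so the spanning family supplied by Theorem~\ref{10.1.1} has exactly $|R(m)|=\rho(n,m)$ elements. The second step is to invoke Theorem~\ref{10.5.1}, or rather its restatement~\eqref{10.5.5}, which gives $\rho(n,m)=\dim\inv_m^{\left(n\right)}$. Hence the spanning family has precisely $\dim\inv_m^{\left(n\right)}$ members.

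To finish, I would run the standard linear-algebra argument: if these $\dim\inv_m^{\left(n\right)}$ spanning vectors were linearly dependent, one of them would lie in the span of the others, and discarding it would leave a strictly smaller spanning set, forcing $\dim\inv_m^{\left(n\right)}<\rho(n,m)$, which contradicts the equality just obtained. Therefore the Rumer monomials are linearly independent and form a basis of $\inv_m^{\left(n\right)}$.

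There is no genuine obstacle in this corollary; all the substance is already carried by Theorems~\ref{10.1.1} and~\ref{10.5.1}, and the corollary is essentially their juxtaposition. The single point that warrants care is the bookkeeping of cardinalities in the second paragraph, namely the injectivity of the diagram-to-monomial correspondence, which is what guarantees that the spanning set genuinely has $\rho(n,m)$ elements and not fewer. Once that injectivity is in hand, equality of cardinality and dimension makes the basis property immediate.
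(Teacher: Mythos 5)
Your proposal is correct and follows essentially the same route as the paper's own proof: invoke the spanning statement of Theorem~\ref{10.1.1}, use the equality $\rho(n,m)=\dim\inv_m^{\left(n\right)}$ from~\eqref{10.5.5} to match cardinality with dimension, and handle $m=0$ separately via $\inv_0^{\left(n\right)}=\mathbb{C}$. Your added remark on the injectivity of the diagram-to-monomial correspondence is a sensible piece of bookkeeping that the paper leaves implicit.
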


\begin{proof} Because of Theorem~\ref{10.1.1}, the
monomials~\eqref{5.5.20} that correspond to the Rumer diagrams from
$R(m)$ span the $\mathbb{C}$-linear space $\inv_m^{\left(n\right)}$.
On the other hand,~\eqref{10.5.5} yields that their number is equal
to the dimension of $\inv_m^{\left(n\right)}$ for $m\in\mathbb{N}$.
Since $\inv_0^{\left(n\right)}=\mathbb{C}$ and $\rho(n,0)=1$, we are
done.

\end{proof}

\end{document}